\font\rurm=wncyr10 scaled \magstep1
\font\rurm=wncyr10 scaled \magstep1
\def\Q{{\mathbb Q}}
\def\Z{{\mathbb Z}}
\def\fq{{\mathbb F}}
\def\Reel{{\mathbb R}}
\def\p{{\mathfrak p}}
\def\P{{\mathfrak P}}
\def\q{{\mathfrak q}}
\def\QQ{{\mathfrak Q}}
\def\f{{\mathfrak f}}
\def\sha{{{\textnormal{\rurm{Sh}}}}}
\def\CyB{{{\textnormal{\rurm{B}}}}}
\def\N{{\rm N}}
\def\R{{\rm R}}
\def\G{{\rm G}}
\def\HH{{\rm H}}
\def\J{{\rm J}}
\def\K{{\rm K}}
\def\L{{\rm L}}
\def\F{{\rm F}}
\def\V{{\rm V}}
\def\Gg{{\rm G}}
\def\rd{{\rm rd}}
\def\O{{\mathcal O}}
\def\SS{{\mathcal S}}
\def\PP{{\mathcal P}}
\def\Rd{{\rm Rd}}
\def\Gal{{\rm Gal}}
\def\Cl{{\rm Cl}}
\def\ker{{\rm ker}}
\def\FF#1#2{{\displaystyle{\left(\frac{#1}{#2}\right)}}}
\newenvironment{Question}{\begin{enonce}{Question}}{\end{enonce}}
\newenvironment{conjecture}{\begin{enonce}{Conjecture}}{\end{enonce}}
\begin{document}

\date{\today}

\title{Cutting  towers of number fields}
\author{Farshid Hajir, Christian Maire, Ravi Ramakrishna}
\address{Department of Mathematics, University of Massachussetts, Amherst, MA 01003, USA}
 \address{FEMTO-ST Institute, 15B avenue des Montboucons, 25000 Besancon, FRANCE} 
\address{Department of Mathematics, Cornell University, Ithaca, USA}
\email{hajir@math.umass.edu, christian.maire@univ-fcomte.fr, ravi@math.cornell.edu}
\subjclass{11R29, 11R37, 11R21}
\keywords{root-discriminant, asymptotically good extensions, Golod-Shafarevich Theorem}

\thanks{This work has been done during a visiting scholar position for the second author at Cornell University for  the academic year 2017-18, and funded by the program "Mobilit\'e sortante" of the R\'egion Bourgogne Franche-Comt\'e;  CM thanks the  Department of Mathematics at Cornell University for providing a beautiful research atmosphere.
The third author was supported by Simons Collaboration grant 524863.}

%%%%%%%%%%%%%%%%%%%%%%%%%%%%%%%%%%%%%%%%%%%%%%%%%%%%%%%%%%%%%
%%%%%%%%%%%%%%%%%%%%%%%%%%%%%%%%%%%%%%%%%%%%%%%%%%%%%%%%%%%%%
%%%%%%%%%%%%%%%%%%%%%%%%%%%%%%%%%%%%%%%%%%%%%%%%%%%%%%%%%%%%%

\begin{abstract} Given a prime $p$, a number field $\K$ and a finite set of places $S$ of $\K$,
let $\K_S$ be the maximal pro-$p$ extension of $\K$ unramified outside $S$. Using the Golod-Shafarevich criterion one can
often show that $\K_S/\K$ is infinite. In both the tame and wild cases we construct infinite subextensions with bounded ramification using the refined Golod-Shafarevich criterion. In
 the tame setting we achieve new records on Martinet constants (root discriminant bounds) in the totally real and totally complex cases. 
 We
are also able to answer a question of Ihara by producing infinite asymptotically good extensions in which infinitely many primes split completely.
\end{abstract}

\maketitle

\tableofcontents

%%%%%%%%%%%%%%%%%%%%%%%%%%%%%%%%%%%%%%%%%%%%%%%%%%%%%%%%%%%%%
%%%%%%%%%%%%%%%%%%%%%%%%%%%%%%%%%%%%%%%%%%%%%%%%%%%%%%%%%%%%%
%%%%%%%%%%%%%%%%%%%%%%%%%%%%%%%%%%%%%%%%%%%%%%%%%%%%%%%%%%%%%

Let $p$ be a prime number, 
 $\K$ a number field and $S$ a finite set of  finite places of $\K$. Denote by $\K_S$ and $\G_S$ the maximal pro-$p$ extension of $\K$ unramified outside $S$ 
and  $\Gal(\K_S/\K)$ respectively.
When $\G_S$ is infinite, we will exhibit some special infinite quotients  of $\G_S$. We establish infiniteness by using 
 an old, but not so well-known 
 refinement of the Theorem of Golod-Shafarevich.
The best uses of this refinement
are probably the work on deep relations of
Koch \cite[Chapter 12]{Koch}, Koch-Venkov \cite{Koch-Venkov}, Kisilevski-Labute \cite{Kisilevski-Labute} and Schoof \cite{Schoof} (when $S=\emptyset$).

\smallskip

More precisely, by starting with an infinite tower $\K_S/\K$, we cut (quotient) $\G_S$ in order to produce three kind of results:
\begin{enumerate}
\item[$(i)$]  We cut by  Frobenii; this allows us to produce asymptotically good extensions where the set of places
that split completely  is infinite: this was an open question of  Ihara \cite{Ihara};
\item[$(ii)$]  We cut by powers of the generators of tame inertia groups; this allows us to improve upon the results of  Hajir-Maire   \cite{Hajir-Maire-JSC} in the totally
complex case 
and Martin \cite{Martin} in the totally real case concerning Martinet constants 
(root discriminants) 
in infinite towers of number fields (also see \cite{Martinet}). Our improvements towards the GRH lower bounds in the totally complex and totally real cases are, by one metric, about 7.55\% and 4.36\%. \color{black}
In contrast, the improvements made
in \cite{Hajir-Maire-JSC} over \cite{Martinet} were, respectively, 16.27\% and 6.54\%;
\item[$(iii)$] In the wild ramification context, we cut by   local commutators and  powers of  generators of the inertia group. 
As an application, let $\K$ be a  totally imaginary number field of degree at least $12$ and let $\K_{S_p}$ be its maximal pro-$p$ extension
ramified only at primes above $p$.   Then, for infinitely many primes $p$ (assuming  a recent conjecture of Gras concerning $p$-rational fields \cite{Gras-CJM}), the extension $\K_{S_p}/\K$ contains an infinite unramified tower of number fields.
\end{enumerate}

In  the context of pure pro-$p$ group theory,  Wilson in fact   used the idea of cutting in \cite{Wilson}. If we translate his result to our Galois context, we  obtain  the existence of 
an infinite subextension in $\K_S/\K$ for which  all the Frobenius elements are torsion.  

%%%%%%%%%%%%%%%%%%%%%%%%%%%%
%%%%%%%%%%%%%%%%%%%%%%%%%%%%%%%%%%%%%%%%%%%%%%%%%%%%%%%%
%%%%%%%%%%%%%%%%%%%%%%%%%%%%

\

Recall that for a number field $\K$ with $[\K:\Q]=n$, the root discriminant of $\K$, denoted $\rd_\K$, is
$|\rm{Disc}(\K)|^{1/n}$.
There are absolute lower bounds, improved over the years, that
include terms that go to $0$ as $n \to \infty$. These lower bounds depend on the signature of $\K$ and 
have been achieved by analytic methods. The best lower bounds depend on the GRH. 

\

The term that goes to $0$ with increasing degree makes it natural to consider towers of number fields and take the $\limsup$
of the root discriminants. For the $p$-power cyclotomic tower it is an exercise to see this $\limsup$ is $\infty$. It is also
an exercise to see that root discriminants are constant in unramified extensions. Thus the work of Golod and Shafarevich establishing the
existence of infinite Hilbert Class Field towers also immediately gave a rich supply infinite towers with bounded root discriminants. 
Recall Euler's constant $\gamma:= \lim_{n\to\infty}\left( (\sum^n_{k=1} \frac1k) -\log n \right)$.
The current GRH lower bounds for infinite towers are
$8\pi e^{\gamma} \approx 44.763$ for totally complex fields and $8\pi e^{\gamma+\frac{\pi}2} \approx 215.33$  for totally real fields.
See \cite{Odlyzko} for a nice history of this work up until 1990.

\

It is also natural to seek explicit examples of infinite towers with small $\limsup$ of the root discriminants. As mentioned above, Martinet and then Hajir-Maire
gave totally real and totally complex infinite towers with small root discriminant. Hajir-Maire introduced the idea of allowing tame ramification. One can show
the relevant Galois groups are infinite using the Golod-Sharafevich criterion, and the root discriminants can be bounded by tame ramification theory. 
In this paper we establish that our technique 
of cutting works well in both the tame and wild cases.

\

\

\

{\bf Notations}:

We fix a rational prime  $p$. 

$-$ 
Given a $\Z$-module $M$, we denote by $d_p M$ the dimension over $\fq_p$ of $\fq_p \otimes M$: it is the $p$-rank of $M$. 

\medskip

$-$ We fix  a number field $\K$. By abuse, we identify prime ideals $\p \subset \O_\K$ and places~$v$ of~$\K$.  For a place $v$ of $\K$, we denote by $\K_v$ the completion,  and by $U_v$ the local units.  For  $v$ is finite, we denote by 
 $\pi_v$ an uniformizer and  by $v$ the corresponding valuation.

For a prime ideal $\p\subset \O_\K$, we put $\N(\p):=\# (\O_\K/\p)$ its absolute norm.
\color{black} 
\medskip

$-$ 
We fix now a finite set $S$ of finite places of $\K$.
For $\p \in S$, not dividing $p$, one assumes that $\N(\p) \equiv 1 \,{\rm mod} \ p$. 
When all $\p \in S$ are prime to $p$ we call $S$ {\it tame}. 
The set of all $\p \subset \O_\K $ dividing $(p)$ is denoted $S_p$.

If $\L/\K$ is a finite extension, we also denote by $S$ the set of places of $\L$ above~$S$.

\medskip

$-$ 
The maximal pro-$p$ extension of $\K$ unramified outside $S$ is denoted $\K_S$. Note $\K_\emptyset$ is the maximal everywhere unramified pro-$p$ extension of $\K$.
The Galois group of $\K_S/\K$
is denoted $\G_S$. 

\medskip

$-$
All cohomology groups of $\G_S$ have coefficients in $\Z/p$, and
we denote by $d=d(\G_S)$ and $r=r(\G_S)$, $d_p H^i(\G_S)$ for $i=1,2$ respectively.
Hence, $d(\G_S)=d_p (\G_S/[\G_S,\G_S])$. More generally for a finitely generated pro-$p$ group $\G$, we denote by $d_p \G:= d(\G)$ its $p$-rank.

\medskip

$-$ 
Set 
$\delta_{\K,p}= \left\{ \begin{array}{rr} 1 & \mu_p \subset \K  \\
0 & \rm{otherwise} \end{array} \right.$.
If $\K$ has signature $(r_1,r_2)$, set $\alpha_{\K,S}:=2+2 \sqrt{r_1+r_2+\theta_{\K,S}}$
where $\theta_{\K,S}=\left\{ \begin{array}{ll} 0 & S \neq \emptyset  \\
\delta_{\K,p} & S=\emptyset \end{array} \right.$.
It is well-known that  when $S$ is tame: $d(\G_S) > \alpha_{\K,S} \Longrightarrow r <  d^2/4$. More precisely, if $P(t)=1-dt+rt^2$ then  $P(t_0)<0$ for $\displaystyle{t_0=d/2r \in  ]0,\frac{1}{2}]}$.

\medskip

$-$ 
For a number field $\L$ the root discriminant of $\L$, denoted $\rd_{\L}$, is  $|{\rm Disc}(\L)|^{1/[\L:\Q]}$. If $\J$ is an infinite algebraic extension of $\Q$,
we set $\rd_{\J}:= \limsup_\L |{\rm Disc}(\L)|^{1/[\L:\Q]}$ where the limit is taken over all number fields $\L \subset \J$. In the relative setting, e.g $\L/\K$,
we compute discriminants to $\K$ and use the degree over $\K$ when taking roots.
\color{black}

\medskip

%%%%%%%%%%%%%%%%%%%%%%%%%%%%%%%%%%%%%%%%%%%%%%%%%%%%%%%%%
%%%%%%%%%%%%%%%%%%%%%%%%%%%%%%%%%%%%%%%%%%%%%%%%%%%%%%%%%%%%%%%%%%%
%%%%%%%%%%%%%%      GS %%%%%%%%%%%%%%%%%%%%%%%%%%%%%%%%%%%%%%%%%
%%%%%%%%%%%%%%%%%%%%%%%%%%%%%%%%%%%%%%%%%%%%%%%%%%%%%%%%%%%
%%%%%%%%%%%%%%%%%%%%%%%%%%%%%%%%%%%%%%%%%%%%%%%%%%%%%%%%%%%%%%%%%%%%

\section{Depth of relations and the Theorem of Golod-Shafarevich} Good references are \cite[Appendice]{Lazard} and \cite{Koch}.

\subsection{The Zassenhaus filtration}
Consider a finitely generated pro-$p$ group~$\G$. 
Denote by $I:=I_\G$ the augmentation ideal of the completed
algebra $\fq_p\ldbrack \G \rdbrack$, {\emph i.e.}, $I_\G:= \ker(\fq_p \ldbrack \G \rdbrack \rightarrow \fq_p)$. The powers $I^n$ of $I$ are closed ideals and 
topologically
 finitely generated over $\fq_p$.

\begin{defi} Given  $x\in \G$, $x\neq 1$, denote by $\omega(x):=\max\{n, \ x-1 \in I^n\}$. We call the integer $\omega(x)$  the {\it depth} or
{\it level/weight}  of $x$. Put $\omega(1)=\infty$.%; it is also the depth/weight  of $x$. 
\end{defi}

Recall the Zassenhaus filtration of $\G$:
$$ \G_n=\{g\in \G, \omega(g) \geq n \}, \ n \geq 1$$ and
 that $I/I^2 \simeq \G/[\G,\G]\G^p$, hence, $\G_2=[\G,\G]\G^p$.
The sequence $\G_n$ is a filtration of open normal subgroups of $\G$.

\begin{prop} \label{prop:proprietes-omega} One has the  following properties:
\begin{enumerate}
\item[$(i)$] For $x \in \G$,  $\omega(x^p) \geq p \omega(x)$. Hence, if $x\in \G_n$,  then $x^p \in \G_{np}$;
\item[$(ii)$] For $x\in \G_n$ and $y\in \G_m$, one has  $[x,y] \in \G_{n+m}$;
\item[$(iii)$] For $x,y \in \G$, $\omega(xy) \geq \min (\omega(x),\omega(y))$.
\end{enumerate}
\end{prop}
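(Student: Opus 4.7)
My plan is to prove all three properties by working in the completed group algebra $\fq_p\ldbrack \G\rdbrack$, exploiting the translation $\omega(x)\geq n \iff x-1 \in I^n$, and using the fact (coming from the definition $\fq_p = \F_p$) that the base ring has characteristic $p$.

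First I would handle (iii), which is the easiest. Starting from the identity
\[
xy - 1 = (x-1) + (y-1) + (x-1)(y-1)
\]
in $\fq_p\ldbrack \G\rdbrack$, if $\omega(x)=a$ and $\omega(y)=b$ then the last term lies in $I^{a+b}$, while the first two lie in $I^a$ and $I^b$, so the sum lies in $I^{\min(a,b)}$. This yields $\omega(xy)\geq \min(\omega(x),\omega(y))$.

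For (ii), I would use the identity
\[
[x,y] - 1 = x^{-1}y^{-1}\bigl( (x-1)(y-1) - (y-1)(x-1) \bigr),
\]
which follows from $[x,y] = x^{-1}y^{-1}xy$ and $xy - yx = (x-1)(y-1)-(y-1)(x-1)$. If $x\in \G_n$ and $y\in \G_m$ then $(x-1)(y-1)$ and $(y-1)(x-1)$ both lie in $I^{n+m}$. Since $x^{-1}y^{-1}$ is a unit of $\fq_p\ldbrack \G\rdbrack$ (multiplying by it preserves the filtration on the ideals), we conclude $[x,y]-1 \in I^{n+m}$, i.e.\ $[x,y]\in \G_{n+m}$.

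For (i), which is the step where characteristic $p$ is essential, I would set $a = x-1$ and observe that $a$ commutes with $1$ in the algebra, so the binomial theorem gives
\[
x^p = (1+a)^p = \sum_{k=0}^{p} \binom{p}{k}\, a^k.
\]
Because the coefficients lie in $\fq_p = \F_p$, the intermediate binomial coefficients $\binom{p}{k}$ for $1\leq k \leq p-1$ vanish, so $x^p - 1 = a^p = (x-1)^p$. If $\omega(x)=n$, then $(x-1)^p \in (I^n)^p \subset I^{np}$, whence $\omega(x^p)\geq p\,\omega(x)$; the second assertion of (i) follows immediately. The main subtlety is conceptual rather than technical: one must be careful that the binomial expansion is valid (it is, because $1$ is central), and that the identification of $\fq_p$ with $\F_p$ legitimately kills the cross terms. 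With these observations in hand no obstacle remains, since the rest reduces to bookkeeping in the augmentation ideal.
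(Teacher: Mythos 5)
Your proof is correct and is exactly the standard argument: the paper itself does not write out a proof but simply cites \S 7.4 of Koch, where these three properties are established by the same identities in the completed group algebra ($xy-1=(x-1)+(y-1)+(x-1)(y-1)$, $[x,y]-1=x^{-1}y^{-1}\bigl((x-1)(y-1)-(y-1)(x-1)\bigr)$, and $(1+a)^p=1+a^p$ in characteristic $p$). Nothing to add.
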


\begin{proof} 
%Well-known. 
See \S 7.4 of \cite{Koch}.
\end{proof}

Hence $\omega$ is a restricted filtration following the terminology of Lazard (see \cite[Appendice A2, Definition 2.2 page 201]{Lazard}). In fact, one even has:
\begin{prop}
The Zassenhaus filtration of a pro-$p$ group $\G$,   is the minimal   filtration on $\G$ satisfying:
\begin{enumerate}
\item[$(i)$] $\omega(\G)\geq 1$; 
\item[$(ii)$] $\omega(x^p) \geq p \omega(x)$; 
\item[$(iii)$] for all $\nu>0$, $\G_\nu:=\{x\in \G, \omega(x)\geq \nu\}$ are closed. 
\end{enumerate}
\end{prop}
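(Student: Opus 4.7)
My plan is to split the proof into two stages. First, verify that the Zassenhaus filtration itself satisfies (i)--(iii). This is essentially a reformulation of what was set up above: (i) holds because every $g\in\G$ satisfies $g-1\in I$, so $\G_1=\G$; (ii) is exactly Proposition \ref{prop:proprietes-omega}(i); and (iii) follows because $I^n$ is closed in the completed group algebra $\fq_p\ldbrack\G\rdbrack$ while the map $g\mapsto g-1$ is continuous, whence the preimage $\G_n$ is closed in $\G$.

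Next, I would establish minimality. Let $\omega'$ be another filtration on $\G$ satisfying (i)--(iii); by \emph{filtration}, following Lazard, I mean $\omega'$ satisfies $\omega'(xy)\geq\min(\omega'(x),\omega'(y))$ and $\omega'([x,y])\geq\omega'(x)+\omega'(y)$. The claim is $\G_\nu\subseteq\G'_\nu$ for every $\nu\geq 1$ (the content of minimality). From (i), $\G'_1=\G$; combined with the commutator axiom, an easy induction gives $\gamma_i(\G)\subseteq\G'_i$ for every term $\gamma_i$ of the lower central series of $\G$. Applying (ii) iteratively then yields $x\in\G'_i\Rightarrow x^{p^j}\in\G'_{ip^j}$, so $\gamma_i(\G)^{p^j}\subseteq\G'_{ip^j}$.

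The key structural input is the Jennings--Lazard description
\[
\G_\nu=\overline{\bigl\langle\,\gamma_i(\G)^{p^j}\ :\ ip^j\geq\nu\,\bigr\rangle},
\]
expressing each Zassenhaus subgroup as the topological closure of the subgroup generated by the lower central series $p$-power subgroups of combined weight at least $\nu$. Granting this, each generator $\gamma_i(\G)^{p^j}$ with $ip^j\geq\nu$ lies in $\G'_{ip^j}\subseteq\G'_\nu$, hence so does the subgroup they generate, and by closedness (axiom (iii) for $\omega'$) so does its closure. This gives $\G_\nu\subseteq\G'_\nu$, proving minimality. The main obstacle is justifying the Jennings--Lazard identity in the pro-$p$ setting: the inclusion $\supseteq$ is what was really used, and it reduces to the standard discrete-group computation of $\gamma_i(\G)^{p^j}$ inside the augmentation ideal (the element $[g_1,\dots,g_i]^{p^j}-1$ expands to an element of $I^{ip^j}$), followed by taking closures. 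It is precisely this passage to closures that makes axiom (iii) indispensable in the statement.
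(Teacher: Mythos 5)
Your first stage (the Zassenhaus filtration satisfies (i)--(iii)) is fine, and your reduction of minimality to the Jennings--Lazard identity
\[
\G_\nu=\overline{\bigl\langle\,\gamma_i(\G)^{p^j}:ip^j\geq\nu\,\bigr\rangle}
\]
is a legitimate route (the paper simply cites Lazard, Appendice A2, Th.~3.5, and Lazard's own argument goes through the associated graded restricted Lie algebra rather than through this identity directly). But there is a genuine gap, and you have it located in exactly the wrong place. Writing $H_\nu$ for the right-hand side above, your minimality argument runs: $x\in\G_\nu=H_\nu$, each generator $\gamma_i(\G)^{p^j}$ lies in $\G'_{ip^j}\subseteq\G'_\nu$, and $\G'_\nu$ is a closed subgroup, hence $H_\nu\subseteq\G'_\nu$. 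For this you need the inclusion $\G_\nu\subseteq H_\nu$. The computation you invoke at the end --- that $[g_1,\dots,g_i]^{p^j}-1$ expands into $I^{ip^j}$ --- proves only $H_\nu\subseteq\G_\nu$, i.e.\ the \emph{opposite} inclusion, which is the easy half and is not used anywhere in your minimality argument. The half you actually use, namely that every $g$ with $g-1\in I^\nu$ is a limit of products of the designated lower-central-series $p$-powers, is the deep content of Jennings' theorem (Lazard's in the pro-$p$ setting): it requires identifying the graded algebra $\bigoplus I^n/I^{n+1}$ with the restricted universal envelope of the graded restricted Lie algebra $\bigoplus\G_n/\G_{n+1}$ and applying the Poincar\'e--Birkhoff--Witt theorem, and it is essentially equivalent to the minimality statement you are trying to prove. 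As written, your proof assumes the theorem while claiming to have reduced it to an elementary expansion.

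If you intend to quote the Jennings--Lazard identity as a black box, the argument becomes correct but circularly close to the cited result; if you intend to prove it, you must supply the hard inclusion $\G_\nu\subseteq H_\nu$, and the sentence beginning ``the inclusion $\supseteq$ is what was really used'' should be deleted, since it misstates which direction carries the weight.
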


\begin{proof} See Lazard \cite[Appendice A2, Theorem 3.5 page 205]{Lazard}.
\end{proof}

\subsection{The inequality}

Let $\G$ be a finitely generated pro-$p$ group of $p$-rank $d$. Let 
$$1\longrightarrow \R \longrightarrow \F \stackrel{\varphi}{\longrightarrow} \G \longrightarrow 1$$ be a minimal presentation $(\PP)$ of $\G$: here $\F$ is a free pro-$p$ group on $d$ generators with its  Zassenhaus filtration $\omega$.

Suppose that $\R$ can be generated by $\{ \rho_i, i=1,\cdots \}$ as a normal subgroup of $\F$. We denote $\R=\langle  \rho_i, i=1,\cdots \rangle^{\rm N}$. For $k\geq 1$, put 
 $$r_k= \#\{ \rho_i, \ \omega(\rho_i)=k\}$$
and  assume each $r_k $ is finite.
Usually one assumes that $r_k=0$ for large $k$, but this is not necessary and we will not do so in Theorem~\ref{theo:ihara}.
  % One only has to assume that $\sum_k r_k t^k$ converges over an intervalle $[0,b[ \subset [0,1[$, $b>0$.
 We denote by $$\displaystyle{P_\PP(t):=\sum_{k\geq 2} r_k t^k -dt+1  \in \Reel\ldbrack t \rdbrack }$$  the Golod-Shafarevich polynomial (in fact series)  associated to this presentation.
  If we have no information about the depth of the relations of $\R$, then we take $P_\PP(t)=1-dt+rt^2$, where $r=d_p H^2(\G)$ (when it is finite).

 \begin{theo}[Vinberg \cite{Vinberg}] \label{theo:GS}
 If $\G$ is finite, then $$P_\PP(t) >0, \ \forall t \in ]0,1[.$$
 \end{theo}
  
  \begin{proof}
Adapt the proof of \cite{Koch}, or see \cite{Andozski}.  See also Anick \cite{Anick}. 
  \end{proof}

 \begin{rema}
One may have partial information on the depth of the relations. For example, assume that  one has only $\omega(\rho_k) \geq a_k$ for all $k$.
Then as $1-dt+\sum_{\rho_k} t^{a_k} \geq P_\PP(t)$ (coefficient by coefficient), if $1-dt+\sum_{\rho_k} t^{a_k}$ has a root on $]0,1[$, then $\G$ is infinite. 
When one has to assume all relations  are  depth two,
we obtain the Golod-Shafarevich inequality `$r \leq d^2/4$' that guarantees  $\G$ is infinite.
We say that $\G$ {\it passes the Golod-Shafarevich test} if there exists $t_0 \in ]0,1[$ such that  $P_\PP(t_0) <0$. Observe that if $P_\PP(t_0)=0$ then $\G$ is infinite, but for many of our  applications we need some $t_0$ such that $P_\PP(t_0)<0$. 
Finally, it is known that if $\G$ passes the G-S test and $d\geq 2$, then $\G$ is {\it not} analytic.
  \end{rema}

\subsection{Detecting the depth of an element}

We start with a minimal presentation $(\PP)$ of $\G$:
$$1\longrightarrow \R \longrightarrow \F \stackrel{\varphi}{\longrightarrow} \G \longrightarrow 1.$$

One has the following result that gives a relation between the Zassenhaus filtrations $\omega_\F$ of $\F$ and $\omega_\G$ of   $\G$.

\begin{prop} \label{lemma:Zassenhausfiltration}
The Zassenhaus filtration $\omega_\G$ of $\G$ coincides with the  filtration quotient of the Zassenhaus filtration $\omega_\F$ of $\F$:
for all $g\in \G$,  $\omega_\G(g)=\max\{\omega_\F(x), \ \varphi(x)=g\}$.
Hence, for $i\geq 1$, $$\G_i \simeq \F_i \R/\R \simeq \F_i/\F_i\cap \R.$$
\end{prop}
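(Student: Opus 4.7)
The plan is to establish the two inequalities $\omega_\G(g)\geq\tilde\omega(g)$ and $\omega_\G(g)\leq\tilde\omega(g)$, where $\tilde\omega(g):=\sup\{\omega_\F(x):\varphi(x)=g\}$. The first direction uses functoriality of the augmentation ideal, and the second uses Lazard's minimality characterization of the Zassenhaus filtration recalled just above.

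For the first direction, I would note that $\varphi$ extends by continuity to a surjection of completed $\fq_p$-group algebras $\fq_p\ldbrack\F\rdbrack\twoheadrightarrow\fq_p\ldbrack\G\rdbrack$, sending $I_\F$ onto $I_\G$ and therefore $I_\F^n$ onto $I_\G^n$. Hence any lift $x$ of $g$ with $x-1\in I_\F^n$ produces $g-1\in I_\G^n$, whence $\omega_\G(g)\geq\omega_\F(x)$; taking the sup over all lifts yields $\omega_\G(g)\geq\tilde\omega(g)$. In particular $\varphi(\F_i)\subseteq\G_i$.

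For the second direction, I would check that $\tilde\omega$ is a (restricted) filtration on $\G$ that satisfies the three Lazard conditions recalled above. First, the sup defining $\tilde\omega(g)$ is actually attained: for $g\neq 1$ one has $\tilde\omega(g)\leq\omega_\G(g)<\infty$ by the previous paragraph, and the nested closed (hence compact) subsets $\varphi^{-1}(g)\cap\F_n$ of the compact group $\F$ admit an accumulation point $x$ realizing the maximum. Using such optimal lifts, the product, commutator, and $p$-th power inequalities for $\tilde\omega$ transfer immediately from the corresponding properties of $\omega_\F$ (Proposition~\ref{prop:proprietes-omega}), since if $x_i$ realizes $\tilde\omega(g_i)$ then $x_1x_2$, $[x_1,x_2]$ and $x_1^p$ are lifts of $g_1g_2$, $[g_1,g_2]$ and $g_1^p$ with the required $\omega_\F$-weights. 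Closedness of $\{\tilde\omega\geq\nu\}=\varphi(\F_\nu)=\F_\nu\R/\R$ follows because $\F_\nu$ is closed in the compact pro-$p$ group $\F$, hence compact, and its continuous image in the Hausdorff group $\G$ is compact, hence closed. Lazard's minimality then yields $\omega_\G\leq\tilde\omega$.

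Combining the two inequalities gives $\omega_\G(g)=\max\{\omega_\F(x):\varphi(x)=g\}$, and the identification $\G_i\simeq\F_i\R/\R\simeq\F_i/(\F_i\cap\R)$ is then immediate from the first isomorphism theorem applied to $\varphi|_{\F_i}$. The delicate point will be confirming that $\tilde\omega$ satisfies every axiom Lazard demands of a restricted filtration, not only the three conditions explicitly highlighted in the minimality statement; all such axioms nevertheless transfer cleanly through the existence of optimal lifts in~$\F$.
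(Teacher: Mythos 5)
Your proof is correct. The paper itself simply cites Lazard (Appendice A2, Th.~3.5), and your two-inequality argument --- one direction from functoriality of the $I$-adic filtration under the surjection of completed group algebras, the other from Lazard's minimality criterion for the Zassenhaus filtration (which the paper records just before the statement) --- is precisely the argument that reference supplies, so you have faithfully reconstructed it rather than found a genuinely different route; the only cosmetic remark is that the compactness step for attaining the supremum is unnecessary, since $\omega_\F$ is integer-valued and the supremum is finite by your first inequality.
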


\begin{proof}
See Lazard \cite[Appendice 3, Th. 3.5, page 205]{Lazard}.
\end{proof}

\begin{defi}\label{defi:Frattini} The Frattini series of a pro-$p$ group $\G$ is defined by
$\Phi_1(\G)=G$ and $\Phi_{i+1}(\G) = \G^p_i[\G_i,\G_i]$.

\end{defi}

Given $g\in \G$ and $x\in \F$ such that $\varphi(x)=g$. Often, one wants to detect the depth of $x\in \F$.

\begin{prop}\label{corollary:detect} Put $\varphi(x)=g\in \G$ such that $\omega_\G(g)=\omega_\F(x)$.
One has $\omega_\F(x)\geq  k$ if and only if,  $g \in \G_k$. 
In particular, 
\begin{enumerate}
\item[$(i)$]  $g \in  \G^p[\G,\G] \implies \omega_\F(x) \geq 2$;
\item[$(ii)$]   $g  \in \G_2^p[\G,\G_2] \implies \omega_\F(x) \geq 3$;
\item[$(iii)$]  $g \in \Phi_n(\G) \implies  \omega_\F(x) \geq 2^{n-1}$. 
\end{enumerate}
\end{prop}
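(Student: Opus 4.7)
The plan is to derive the main equivalence $\omega_\F(x)\geq k \Longleftrightarrow g\in \G_k$ as an immediate consequence of Proposition~\ref{lemma:Zassenhausfiltration}, then to obtain each of (i)--(iii) by checking that the subgroup of $\G$ named on the left is contained in the appropriate Zassenhaus layer $\G_k$, so that the already established equivalence supplies the conclusion on $\omega_\F(x)$.

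For the main equivalence, Proposition~\ref{lemma:Zassenhausfiltration} says precisely that $\omega_\G(g) = \max\{\omega_\F(x'): \varphi(x')=g\}$. Under the standing hypothesis $\omega_\F(x)=\omega_\G(g)$ the chosen preimage $x$ realises this maximum, so
\[
\omega_\F(x)\geq k \ \Longleftrightarrow\ \omega_\G(g)\geq k \ \Longleftrightarrow\ g\in \G_k,
\]
the last equivalence being the definition of the Zassenhaus filtration of $\G$.

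For the three particular cases it is enough to verify a containment of subgroups inside $\G$. Part (i) is immediate since $\G^p[\G,\G]=\G_2$ by the definition recalled in the paper. For (ii), Proposition~\ref{prop:proprietes-omega}(i) gives $\G_2^p\subset \G_{2p}\subset \G_3$ (using $p\geq 2$), while Proposition~\ref{prop:proprietes-omega}(ii) gives $[\G,\G_2]\subset \G_{1+2}=\G_3$; combining these yields $\G_2^p[\G,\G_2]\subset \G_3$, so $g\in\G_3$ and hence $\omega_\F(x)\geq 3$. For (iii), I argue by induction on $n$. The base case $n=1$ reads $\Phi_1(\G)=\G=\G_1=\G_{2^{0}}$; assuming $\Phi_n(\G)\subset \G_{2^{n-1}}$, Proposition~\ref{prop:proprietes-omega}(i)(ii) together with $p\geq 2$ gives
\[
\Phi_{n+1}(\G)=\Phi_n(\G)^p\,[\Phi_n(\G),\Phi_n(\G)] \subset \G_{p\cdot 2^{n-1}}\,[\G_{2^{n-1}},\G_{2^{n-1}}] \subset \G_{2^{n}},
\]
which closes the induction and yields $g\in \G_{2^{n-1}}$, hence $\omega_\F(x)\geq 2^{n-1}$.

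There is essentially no hard step here: the proposition is a formal consequence of Proposition~\ref{lemma:Zassenhausfiltration} together with the stability of the Zassenhaus filtration under $p$-th powers and commutators. The one subtlety worth flagging is the role of the hypothesis $\omega_\F(x)=\omega_\G(g)$ in the main equivalence: for a generic preimage $x$ of $g$ one has only the implication $\omega_\F(x)\geq k \Rightarrow g\in \G_k$, and the converse direction really does require selecting the distinguished preimage provided by Proposition~\ref{lemma:Zassenhausfiltration}.
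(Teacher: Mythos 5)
Your proof is correct and follows essentially the same plan as the paper's: reduce each part to a containment of subgroups of $\G$ in the appropriate Zassenhaus layer $\G_k$, then apply the main equivalence supplied by Proposition~\ref{lemma:Zassenhausfiltration}. The one point where you diverge is part (ii): the paper cites the inclusion $\G_{(n)}\subset \G_n$ of the lower $p$-central series in the Zassenhaus filtration and identifies $\G_2^p[\G,\G_2]$ as $\G_{(3)}$, whereas you derive $\G_2^p\subset\G_{2p}\subset\G_3$ and $[\G,\G_2]\subset\G_3$ directly from Proposition~\ref{prop:proprietes-omega}; this is a more self-contained route to the same containment and is equally valid.
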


\begin{proof}
$(i)$ Recall that $\G_2=\G^p[\G,\G]$.

$(ii)$ Consider  the $p$-central descending series $\G_{(n)}$.
Then $\G_{(n)} \subset \G_n$ for all $n\geq 1$ (see for example \S 7 of \cite{Koch}).   Hence, if $g \in \G_{(3)}=\G_2^p[\G,\G_2]$, then $g\in \G_3$ which is equivalent to  $\omega_\G(g) \geq 3$, and then $\omega_\F(x) \geq 3$.

$(iii)$   Clearly $\Phi_2(\G)= \G_2$ and as $\Phi_{n+1}(\G) = ( \Phi_n(\G))^p [ \Phi_n(\G),\Phi_n(\G)] \subset \G_{2^n}$. 
\end{proof}

  \subsection{On some special quotients of a pro-$p$ group $\G$}
We study some {\em infinite} quotients of $\G$.

\begin{prop} \label{prop-seriesofquotient}
Take $(x_i)_{i \in I} \in \G$  a family of elements of $\G$ 
with $\omega_\G(x_i) \geq 2$ for all $x_i$.
Put $\Gamma:=\G/\langle x_i, i \in I \rangle^\N$. Suppose that $\G$ is given by a minimal presentation $(\PP)$ with Golod-Shafarevich polynomial $P_\PP(t)$. Then for the natural minimal presentation $$\displaystyle{1\longrightarrow \langle \R, \langle y_i,i \in I \rangle \rangle^\N  \longrightarrow \F 
\stackrel{\varphi'}{\longrightarrow} \Gamma \longrightarrow 1}$$ of $\Gamma$, where $y_i$ are chosen such that $\varphi(y_i)=x_i$ 
and $\omega_\F(y_i)=\omega_\G(x_i)$,
 the Golod-Shafarevich polynomial  of $\Gamma$ is $P_\PP(t) + \sum_{i\in I} t^{\omega_\G(x_i)}$.
\end{prop}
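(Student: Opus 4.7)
The proof breaks into three observations: that the proposed presentation has the right number of generators, that we can lift each $x_i$ to $\F$ without losing weight, and then a direct count giving the polynomial.

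First, I would verify that $d(\Gamma) = d(\G) = d$. Since each $x_i$ has $\omega_\G(x_i) \geq 2$, we have $x_i \in \G_2 = \G^p[\G,\G]$, so $x_i$ is trivial in the Frattini quotient $\G/\G^p[\G,\G]$. Hence the surjection $\pi : \G \twoheadrightarrow \Gamma$ induces an isomorphism of Frattini quotients, so $d(\Gamma) = d(\G) = d$. Consequently $\varphi' := \pi \circ \varphi$ is a minimal pro-$p$ surjection from $\F$ onto $\Gamma$, and we may use $\F$ with its original generators as the free cover.

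Second, I would invoke Proposition \ref{lemma:Zassenhausfiltration} to pick, for each $i \in I$, a preimage $y_i \in \F$ of $x_i$ with $\omega_\F(y_i) = \omega_\G(x_i)$. A short check shows that, as a normal subgroup of $\F$, $\ker \varphi' = \langle \R, \{y_i\}_{i \in I} \rangle^\N$: the inclusion $\supseteq$ is immediate, and $\subseteq$ follows because any element of $\ker \varphi'$ maps to an element of the normal closure of $\{x_i\}$ in $\G$, which lifts, modulo $\R$, to a product of conjugates of the $y_i$ in $\F$. This confirms that the displayed sequence is indeed a presentation of $\Gamma$.

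Third, the Golod-Shafarevich polynomial of any presentation is the formal sum $1 - dt + \sum_{k \geq 2} r_k t^k$, where $r_k$ counts the chosen normal generators of the relation subgroup having $\omega_\F$-depth exactly $k$. The original generators $\rho_i$ of $\R$ contribute the series $P_\PP(t) - (1 - dt)$, and the new generators $y_i$ contribute one term $t^{\omega_\F(y_i)} = t^{\omega_\G(x_i)}$ each. Adding $1 - dt$ back yields $P_\PP(t) + \sum_{i \in I} t^{\omega_\G(x_i)}$, as claimed.

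The only non-routine input is the existence of weight-preserving lifts $y_i$, which is precisely the content of Proposition \ref{lemma:Zassenhausfiltration}. Without it, an arbitrary lift of $x_i$ would only satisfy $\omega_\F(y_i) \leq \omega_\G(x_i)$, producing a larger (hence weaker) polynomial; the equality case gives the sharp expression stated. The generator count and the identification of $\ker \varphi'$ are then formalities.
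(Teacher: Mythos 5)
Your proof is correct; the paper's own proof of this proposition is simply the word ``Obvious,'' so there is no alternative argument to compare against. Your write-up supplies exactly the intended details: the Frattini-quotient argument for $d(\Gamma)=d(\G)$, the weight-preserving lifts $y_i$ via Proposition~\ref{lemma:Zassenhausfiltration} (which is indeed the one non-formal input, guaranteeing the stated polynomial rather than a coefficientwise larger one), and the bookkeeping for the relation counts $r_k$.
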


\begin{proof}
Obvious.
\end{proof}  
  
Much of what we need follows from this easy proposition:

\begin{prop} \label{prop:cutting}
Let $\G$ be a finitely presented
 pro-$p$ group with a minimal presentation $(\PP)$ and 
 Golod-Shafarevich polynomial  $P_\PP(t)$ of $\G$ (following a given minimal presentation $(\PP)$).  Suppose that  $P_\PP(t_0) <0$ for  some $t_0 \in ]0,1[$. 
Then for suitable $k \geq 2$ and $k' \geq 2$ we have $P_\PP(t_0) +t^k_0 <0$ and $\displaystyle P_\PP(t_0) +\frac{t^{k'}_0}{1-t_0} <0$.

\begin{enumerate}
\item[$(i)$] Let $\f \in \F$ be such that $\omega_\F(\f) \geq k$. Then the group quotient 
$\F/\langle R,\f \rangle^\N \simeq   \G/\varphi(\langle \f \rangle^{\rm N})$ is also infinite.
\item[$(ii)$] Take a sequence $(\f_i)_i \in \F$, $i\geq 0$, such that $\omega_\F(\f_i) \geq k'+i$. Then the group quotient 
$\F/\langle \R, \f_i, i \rangle^{\rm N}  \simeq \G/\varphi(\langle \f_i, i \rangle^{\rm N})  $ is also infinite.

\end{enumerate}
\end{prop}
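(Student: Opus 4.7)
The plan is to apply Vinberg's Theorem~\ref{theo:GS} to the Golod-Shafarevich polynomials of the natural minimal presentations of the two quotients, computing those polynomials via Proposition~\ref{prop-seriesofquotient}.

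First I would settle the existence of the integers $k$ and $k'$. Since $P_\PP(t_0) < 0$ and $t_0 \in \, ]0,1[$, the quantity $t_0^k$ tends to $0$ as $k \to \infty$, and the tail of the geometric series $\frac{t_0^{k'}}{1-t_0} = \sum_{i \geq 0} t_0^{k'+i}$ also tends to $0$ as $k' \to \infty$. Hence for $k$ and $k'$ sufficiently large (and at least $2$) both inequalities $P_\PP(t_0) + t_0^k < 0$ and $P_\PP(t_0) + \frac{t_0^{k'}}{1-t_0} < 0$ hold.

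For part $(i)$, put $x = \varphi(\f) \in \G$. By Proposition~\ref{lemma:Zassenhausfiltration} we have $\omega_\G(x) \geq \omega_\F(\f) \geq k \geq 2$, so Proposition~\ref{prop-seriesofquotient} applies to the singleton family $\{x\}$. It yields a minimal presentation of $\Gamma := \G/\langle x\rangle^\N = \G/\varphi(\langle \f\rangle^\N)$ whose Golod-Shafarevich polynomial is $Q(t) = P_\PP(t) + t^{\omega_\G(x)}$. Since $t_0 \in \, ]0,1[$ and $\omega_\G(x)\ge k$, we get $Q(t_0) \le P_\PP(t_0) + t_0^k < 0$. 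By Vinberg's theorem, $\Gamma$ is infinite.

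For part $(ii)$, set $x_i = \varphi(\f_i)$; by Proposition~\ref{lemma:Zassenhausfiltration}, $\omega_\G(x_i) \geq \omega_\F(\f_i) \geq k'+i \geq 2$. Note that for each fixed integer $n$ only finitely many $i$ can satisfy $\omega_\G(x_i) = n$ (namely $i \leq n - k'$), so the count $r_n$ at each depth remains finite and Proposition~\ref{prop-seriesofquotient} still produces a Golod-Shafarevich series
$$Q(t) = P_\PP(t) + \sum_{i\ge 0} t^{\omega_\G(x_i)}.$$
Evaluating at $t_0$ and bounding term by term gives
$$Q(t_0) \leq P_\PP(t_0) + \sum_{i \geq 0} t_0^{k'+i} = P_\PP(t_0) + \frac{t_0^{k'}}{1-t_0} < 0.$$
Applying Vinberg's theorem, here in the version allowing infinitely many relations as flagged before Theorem~\ref{theo:GS}, we conclude that $\G/\varphi(\langle \f_i, i\rangle^\N)$ is infinite.

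The only real technical point is the translation between the depth $\omega_\F$ on $\F$ (given in the hypothesis) and the depth $\omega_\G$ on $\G$ required by Proposition~\ref{prop-seriesofquotient}, which is handled cleanly by Proposition~\ref{lemma:Zassenhausfiltration}; everything else is a monotonicity argument on the Golod-Shafarevich polynomial combined with Vinberg's criterion.
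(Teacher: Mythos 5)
Your proposal is correct and takes essentially the same route as the paper: augment the relation set of a minimal presentation of the quotient, bound the resulting Golod--Shafarevich polynomial at $t_0$ by $P_\PP(t_0)+t_0^k$ (resp.\ $P_\PP(t_0)+t_0^{k'}/(1-t_0)$), and conclude by Vinberg's Theorem~\ref{theo:GS}. Your explicit check that only finitely many $x_i$ occur at each depth and your use of Proposition~\ref{lemma:Zassenhausfiltration} to translate from $\omega_\F$ to $\omega_\G$ just make explicit what the paper's proof leaves implicit.
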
  

\begin{proof} Let us  start with a minimal presentation $1\longrightarrow \R \longrightarrow \F \stackrel{\varphi}{\longrightarrow} \G \longrightarrow 1$ of $\G$.

$(i)$. When one adds a relation of depth at least $k\geq 2$,  the $p$-rank does not change. 
Thus $P_{\PP'}(t)=P_\PP(t) + t^k $ and $k$ is chosen so $P_{\PP'}(t_0)<0$.
Theorem \ref{theo:GS} gives that $\Gamma$ is infinite.

$(ii)$.  Put $\Gamma:= \G/\varphi(\langle \f_i, i \rangle^{\rm N})$.  First, as the elements $\f_i$ are in $\F_{k'+i}$, with $k'+i\geq 2$, then $d_p \Gamma=d_p \G=d$.
Take now  the following minimal presentation $(\PP')$ of $\Gamma$: $$1\longrightarrow \R' \longrightarrow \F \longrightarrow \G \longrightarrow 1,$$ where $\R'=\langle \rho_j, \f_i, i,j \rangle^{\rm N}$, the $\rho_j$ being some generators of $\R$ as normal subgroup of $\F$. Now, the polynomial  of Golod-Shafarevich for $\Gamma$ and $(\PP')$ can be written as (thanks to Proposition \ref{prop-seriesofquotient}):
 $$P_{\PP'}(t) \leq P_\PP(t) + t^{k'} \sum_{i\geq 0} t^{i}=P_\PP(t) +t^{k'} \frac{1}{1-t},$$ which is negative at $t=t_0$ by assumption.  
 Apply Theorem \ref{theo:GS} as in $(i)$.% implies that $\Gamma$ is infinite.
 \end{proof}

Sometimes one can say a little bit more.
As usual, let $d$ and $r$ be the number of generators and relations of $\G$.
Set $a=2r/d$. Suppose $a>1$ and put $$\lambda=\left\{\begin{array}{ll} \lfloor a \rfloor  & \ \ a \notin \Z \\
 a -1 & \  \ a \in \Z_{>0} 
 \end{array}
 \right. .$$
 Choose now $m \in \Z_{\geq 2}$ such that  $$1-\frac{d^2}{4r} +  \frac{\Big(\frac{\lambda}{a}\Big)^{m}}{1-\frac{\lambda}{a}}< 0.$$

\begin{lemm} \label{prop:cutting2}
Let $\G$ be a finitely presented
 pro-$p$ group  such that $r<d^2/4$  and $2r/d>1$. Take  $\lambda$ and $m$ as above.
For $k\geq 0$, take  distincts elements $\f_{j,k} \in \F$ with $j=1,\cdots, \lambda^{k+m}$, such that $\omega_\F(\f_{j,k}) \geq m+k$. Then the group quotient $\G/\varphi(\langle \f_{j,k}, \ j,k\rangle^{\rm N})$ is also infinite.
\end{lemm}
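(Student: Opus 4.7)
The plan is to reduce the claim to a single application of Vinberg's refinement (Theorem \ref{theo:GS}) by writing down the Golod--Shafarevich series of the quotient and bounding it at a carefully chosen $t_0$. Start from a minimal presentation $1\to\R\to\F\stackrel{\varphi}{\to}\G\to 1$. Since the only information we have about $\G$ is $r<d^2/4$, we record only the depth-two contribution, giving $P_\PP(t)=1-dt+rt^2$. A standard one-variable calculation shows that $P_\PP$ attains its minimum on $(0,1)$ at $t_0:=d/(2r)=1/a$, with $P_\PP(t_0)=1-d^2/(4r)<0$; the inclusion $t_0\in(0,1)$ is guaranteed by $a>1$.

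Next form $\Gamma:=\G/\varphi(\langle\f_{j,k},\,j,k\rangle^\N)$. Since each $\f_{j,k}$ lies in $\F_{m+k}\subset\F_2=[\F,\F]\F^p$ (using $m\geq 2$), adjoining the $\f_{j,k}$ as relations does not change the $p$-rank, and the induced presentation $\PP'$ of $\Gamma$ is minimal on the same $d$ generators. Combining Proposition \ref{prop-seriesofquotient} with the elementary bound $t^{\omega_\F(\f_{j,k})}\leq t^{m+k}$, valid for $t\in(0,1)$, the Golod--Shafarevich series of this presentation is bounded coefficientwise by
$$P_{\PP'}(t)\;\leq\;P_\PP(t)+\sum_{k\geq 0}\lambda^{k+m}\,t^{m+k}\;=\;P_\PP(t)+\frac{(\lambda t)^m}{1-\lambda t},$$
which is meaningful for $t\in[0,1/\lambda)$.

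It remains to evaluate at $t_0=1/a$. In either definition one has $\lambda<a$ (namely $\lambda=\lfloor a\rfloor<a$ when $a\notin\Z$, and $\lambda=a-1<a$ when $a\in\Z_{>0}$), so $\lambda/a<1$ and the geometric sum converges. Substituting yields
$$P_{\PP'}(t_0)\;\leq\;1-\frac{d^2}{4r}+\frac{(\lambda/a)^m}{1-\lambda/a}\;<\;0,$$
the last inequality being exactly the hypothesis imposed on $m$ in the statement. Theorem \ref{theo:GS} then forces $\Gamma$ to be infinite.

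The argument is essentially bookkeeping, and no serious obstacle is expected. The one delicate point is the choice $t_0=1/a$: it must simultaneously minimize $P_\PP$ \emph{and} keep $\lambda t_0<1$ so that the cascade of $\lambda^{k+m}$ relations of depth $\geq m+k$ (as $k$ ranges over $\Z_{\geq 0}$) can be absorbed by a convergent geometric series. The combinatorial choice of $\lambda$ and $m$ in the statement is tailored precisely to make this single evaluation negative.
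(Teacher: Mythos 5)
Your proof is correct and follows essentially the same route as the paper's: bound the Golod--Shafarevich series of the quotient coefficientwise by $P_\PP(t)+\sum_{k\geq 0}\lambda^{m+k}t^{m+k}$ via Proposition~\ref{prop-seriesofquotient}, sum the geometric series using $\lambda<a$, evaluate at $t_0=d/(2r)$, and invoke the defining inequality for $m$ together with Theorem~\ref{theo:GS}. The only addition is your explicit observation that $t_0$ minimizes $P_\PP$ on $(0,1)$, which is a helpful clarification but not strictly needed, since the hypothesis on $m$ is already phrased in terms of negativity at this particular point.
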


\begin{proof} Put $\Gamma:= \G/\varphi(\langle \f_{j,k}, j,k \rangle^{\rm N})$.
As before the new relations  all have depth at least two
so $d_p \Gamma = d_p \G=d$.
 Take for $\G$ the polynomial $P_{\PP}(t)=1-dt+rt^2$. Here, the Golod-Shafarevich polynomial  $P_{\PP'}$ of $\Gamma$ can be taken as 
 $$P_{\PP'}(t)=P_{\PP}(t)+\sum_{k \geq 0}^\infty \lambda^{m+k}  t^{m+k}. $$

As $\lambda < 2r/d$,   the series converge in the neighborhood of $t_0=d/2r$.
Hence, one has: $$P_{\PP'}(d/2r) \leq 1-\frac{d^2}{4r} +  \frac{\Big(\frac{\lambda}{a}\Big)^{m}}{1-\frac{\lambda}{a}}<0,$$
 and Theorem \ref{theo:GS} applies.
\end{proof}

\color{black}

%%%%%%%%%%%%%%%%%%%%%%%%%%%%%%%%%%%%%%%%%%%%%%%%%%%%%%%%%%%%
%%%%%%%%%%%%%%%%%%%%%%%%%%%%%%%%%%%%%%%%%%%%%%%%%%%%%%%%%%%
%%%%%%%%%%%%%%%%%%%%%%%%%%%%%%%%%%%%%%%%%%%%%%%%%%%%%%%%%%%%

\section{Infinitely many splitting in $\K_S/\K$: Ihara's question }
\subsection{Wilson's result}
In this section we address a question of Ihara. Our main result,
Theorem~\ref{theo:ihara},
builds on a group-theoretic result of Wilson \cite{Wilson}, the number-theoretic interpretation of which we give below.

\smallskip

\begin{theo} \label{theo:wilson} Let $\K$ be a number field, and $S$ be a finite set of places of $\K$ coprime to $p$.  Suppose  that $d_p \G_S > \alpha_{\K,S}$. Then there exists an  infinite  pro-$p$ extension ${\tilde \K}/\K$ in $\K_S/\K$ where  all Frobenius elements are  torsion. 
\end{theo} 

\begin{proof}

Let $P_\PP(t)$ be the Golod-Shafarevich polynomial of $\G_S$. 
 Let $x_1,x_2,\dots$ be an enumeration of all Frobenii of primes not in $S$. We know  $P_\PP(t_0) =-\delta <0$
for some  $t_0 \in ]0,1[$. Using Proposition~\ref{prop:proprietes-omega} $(i)$ and Proposition~\ref{prop:cutting} we can add in a relation corresponding to a suitable $p$-power of $x_1$ 
so that the new Golod-Shafarevich polynomial with this relation imposed is $P_\PP(t)+t^{k_1}$ and
$P_{\PP }(t_0) + t^{k_1}_0< -\delta/2$.
Now add in  a suitable relation corresponding to a power of $x_2$ and the new Golod-Shafarevich polynomial with this relation imposed is 
$P_\PP(t)+t^{k_1} +t^{k_2}$ and
$P_{\PP }(t_0) + t^{k_1}_0 +t^{k_2}_0< -\delta/2$. Continuing on with powers of $x_3, x_4$ etc.
the resulting series, $\tilde{P}_\PP(t)$ satisfies
$\tilde{P}_\PP(t_0) \leq -\delta/2 < 0 $ so the corresponding quotient of $\G_S$, fixing ${\tilde \K}$,
 is infinite. By construction, the Frobenius of any unramified
prime in this quotient is torsion.
\end{proof}

\begin{rema}  For extensions for which  Frobenius elements  have {\it uniformly bounded} orders
see Checcoli \cite{Checcoli}. 
\end{rema}

\begin{rema}
Note that every $p$-adic analytic quotient of the infinite quotient that appears in Theorem \ref{theo:wilson} is finite: this is more or less obvious, due to the fact that an infinite $p$-adic analytic group has an open subgroup of finite cohomological dimension and is then torsion free (note here, we can do the same operation with $G_{S_p}$). 
\end{rema}

\subsection{The case of the center}

Using  Proposition \ref{prop:cutting}, one can also cut $\G_S$ by some special commutators. As we will see, this shows the limits of our method.

\medskip

Let $\G_S$ be as usual and let $\{a_1,\cdots, a_d\}$ be a minimal system of generator of $\G_S$ with Zassenhaus filtration $\omega_\G$. 
Let  $x$ be a non-trivial Frobenius element in $\G_S$. Then $\omega_\G([x,a_i]) \geq 1+ \omega_\G(x)$. Hence, assuming that $\G_S$ passes the 
Golod-Shafarevich test ($r<d^2/4$), we are guaranteed that when $\omega_\G(x)$ is large then $\Gamma:=\G_S/\langle [x,a_i], i=1,\cdots d  \rangle^\N$ is also infinite. 

\begin{prop} 
The class of the Frobenius element $x$ in $\Gamma$ is non-trivial and is in the center $Z(\Gamma)$ of $\Gamma$. 
\end{prop}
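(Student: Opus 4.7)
My plan is to prove centrality and non-triviality of $\bar x := \pi(x)$ separately, where $\pi \colon \G_S \twoheadrightarrow \Gamma$ is the quotient map. Centrality will be essentially formal. Non-triviality will come down to a single depth observation in the Zassenhaus filtration.

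For centrality, each relation $[x,a_i]$ is killed in $\Gamma$, so $\bar x$ commutes with every $\pi(a_i)$. The centralizer $Z_\Gamma(\bar x)$ is a closed subgroup of $\Gamma$ containing the topological generating set $\{\pi(a_1),\ldots,\pi(a_d)\}$, hence it equals $\Gamma$, giving $\bar x \in Z(\Gamma)$.

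For non-triviality, the plan is to use Proposition~\ref{prop:proprietes-omega}(ii) together with the normality of the Zassenhaus filtration $\{(\G_S)_k\}_{k \geq 1}$ of $\G_S$. Set $n := \omega_\G(x)$. Since $x \neq 1$ and $\G_S$ is a finitely generated pro-$p$ group, the completed group algebra $\fq_p\ldbrack \G_S\rdbrack$ is Noetherian local with $\bigcap_k I^k = 0$ (Krull intersection), so $n$ is finite and $x \in (\G_S)_n \setminus (\G_S)_{n+1}$. Proposition~\ref{prop:proprietes-omega}(ii) then gives
\[
\omega_\G([x,a_i]) \geq \omega_\G(x) + \omega_\G(a_i) \geq n+1
\]
for every $i$, so each commutator already lies in $(\G_S)_{n+1}$. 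Since $(\G_S)_{n+1}$ is a normal subgroup of $\G_S$, the full normal closure $N := \langle [x,a_i],\, i=1,\ldots,d\rangle^{\rm N}$ is contained in $(\G_S)_{n+1}$. But $x \notin (\G_S)_{n+1}$ by choice of $n$, so $x \notin N$, i.e., $\bar x \neq 1$ in $\Gamma$.

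There is no real obstacle: the entire content is the observation that imposing relations of depth $\geq n+1$ cannot kill an element of depth exactly $n$. The substantive step — infiniteness of $\Gamma$ — has already been secured in the preceding paragraph by applying Proposition~\ref{prop:cutting} to the augmented Golod--Shafarevich polynomial $P_\PP(t) + d\, t^{n+1}$ for $\omega_\G(x)$ sufficiently large.
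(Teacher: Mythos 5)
Your proof is correct and follows essentially the same approach as the paper's; you fill in the details that the paper's two-sentence proof leaves terse, namely that the normal closure of elements of depth $\geq n+1$ sits inside the (normal) filtration subgroup $(\G_S)_{n+1}$, which does not contain $x$ since $\omega_\G(x)=n$.

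One small repair: the justification that $\omega_\G(x) < \infty$ via ``$\fq_p\ldbrack \G_S\rdbrack$ is Noetherian local plus Krull intersection'' is not quite right. For a general finitely generated pro-$p$ group the completed group algebra is \emph{not} Noetherian; it is Noetherian precisely when the group is $p$-adic analytic, and the hypothesis $r<d^2/4$ in play here rules that out (as the paper notes, a group passing the Golod--Shafarevich test with $d\geq 2$ is not analytic). The conclusion $\bigcap_n I^n = 0$ is nonetheless true, but for a different reason: $\fq_p\ldbrack \G_S\rdbrack = \varprojlim_U \fq_p[\G_S/U]$ over open normal $U$, and the augmentation ideal of each finite quotient $\fq_p[\G_S/U]$ is nilpotent, so for each $U$ there is $m_U$ with $I^{m_U}\subseteq \ker(\fq_p\ldbrack\G_S\rdbrack\to\fq_p[\G_S/U])$; equivalently the Zassenhaus subgroups $(\G_S)_n$ form a base of open neighborhoods of $1$, hence $x\neq 1$ forces $\omega_\G(x)<\infty$. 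With that substitution the argument is airtight.
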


\begin{proof}
In $\Gamma$, the  class of  $x$ commutes with the class of $a_i$, for $i=1,\cdots, d$,   and thus with every element 
as the $a_i$'s topologically generate  $\Gamma$. That  $\omega_\G( [x,a_i])> \omega_\G(x)$ implies
$x$ is not trivial in $\Gamma$.
\end{proof}

Now let us remark that $\langle [x,a_i], i=1,\cdots,d \rangle \subset  \langle x \rangle^\N$, hence 
$$\Gamma:=\G_S/\langle [x,a_i], i=1,\cdots,d \rangle^\N \twoheadrightarrow \Gamma':=\G_S/\langle x \rangle^\N.$$

Here for the infiniteness of $\Gamma$ one has to check if
\begin{eqnarray} \label{eq:test1} 1-dt+rt^2+dt^{1+k}
\end{eqnarray}
has a root in $]0,1[$.
For the quotient $\Gamma'$, one has to check if
\begin{eqnarray} \label{eq:test2} 1-dt+rt^2+t^{k}
\end{eqnarray}
has a root in $]0,1[$. Some easy algebra shows that 
 $(\ref{eq:test2})$ is stronger than $(\ref{eq:test1})$: in other words, to prove that $\Gamma$ is infinite it is better to use the criteria for $\Gamma'$. 
Indeed, when $(d,r)=(9,21)$ and $k=3$,  the polynomial $1-9t+20t^2+t^3$ has a root in $]0,1[$ but $1-9t+20t^2+9t^4$ does not, so
 the Golod-Shafarevich test gives $\Gamma'$ is infinite and we can only conclude that $\Gamma$ is infinite as it has $\Gamma'$ as a quotient.
Note:

\begin{prop} Suppose that all primes in $S$ are coprime to $p$.
The pro-$p$ group $\Gamma$ is infinite if and only if $\Gamma'$ is infinite.
\end{prop}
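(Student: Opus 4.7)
The implication ``$\Gamma'$ infinite $\Rightarrow \Gamma$ infinite'' is immediate from the surjection $\Gamma \twoheadrightarrow \Gamma'$ noted in the preceding discussion.

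For the converse I argue contrapositively: assume $\Gamma'$ is finite and deduce $\Gamma$ is finite. By the preceding proposition $\bar x \in Z(\Gamma)$, so the kernel of $\Gamma \twoheadrightarrow \Gamma'$ -- the image in $\Gamma$ of the normal closure $\langle x \rangle^{\rm N}$ of $x$ in $\G_S$ -- collapses to the closed procyclic subgroup $Z := \overline{\langle \bar x\rangle}$, giving a central short exact sequence
\[
1 \longrightarrow Z \longrightarrow \Gamma \longrightarrow \Gamma' \longrightarrow 1.
\]
Being topologically generated by one element, $Z$ is isomorphic either to $\Z/p^n\Z$ or to $\Z_p$. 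Since $\Gamma'$ is finite, $\Gamma$ is finite if and only if $Z$ is finite, equivalently $\bar x$ has finite order in $\Gamma$.

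Suppose for contradiction that $Z \cong \Z_p$. Then $\Gamma$ is virtually $\Z_p$, hence $p$-adic analytic and nilpotent of class at most $2$. Translating to Galois theory, set $\L_0 := \K_S^{\langle x \rangle^{\rm N}}$, so one obtains a tower $\K \subset \L_0 \subset \MM$ with $\L_0/\K$ finite of degree $|\Gamma'|$ (and the prime $\q$ with $\Frob_\q = x$ splitting completely in $\L_0/\K$, since $\bar x$ dies in $\Gamma'$), and $\MM/\L_0$ a $\Z_p$-extension contained in $\K_S/\L_0$ -- thus unramified outside $S$ -- in which the primes of $\L_0$ above $\q$ are inert, since their decomposition subgroup is the full $Z \cong \Z_p$.

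The main obstacle is to rule out this configuration using the tame hypothesis on $S$. Since $[x, a_i] \in [\G_S, \G_S]$ one has $\Gamma^{ab} = \G_S^{ab}$, so the virtual $\Z_p$-ness of $\Gamma$ forces the $\Z_p$-rank of $\G_S^{ab}$ to be at most one, with (a power of) $\bar x$ generating its free part. Combining this with tame class field theory -- which governs the $\Z_p$-extensions of $\L_0$ unramified outside $S$ through the tame local units at primes of $S$ and the global units of $\L_0$ -- and with the fact that $\G_S$ passes Golod--Shafarevich (hence is not itself $p$-adic analytic), one aims to exclude the existence of such a Frobenius-generated $\Z_p$-extension $\MM/\L_0$. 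I expect this final translation of the analytic-quotient scenario into an explicit contradiction under tame ramification to be the main technical difficulty of the proof.
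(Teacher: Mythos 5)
Your reduction is exactly the paper's: identify the kernel of $\Gamma \twoheadrightarrow \Gamma'$ with the central procyclic subgroup $Z = \overline{\langle \bar x\rangle}$, observe that it is either $\Z/p^n$ or $\Z_p$, and note that the extension $\K_S^{\N}/\K_S^{\N'}$ over the number field $\L_0 := \K_S^{\N'}$ is abelian, pro-$p$, and unramified outside the tame set $S$. But you then declare the last step --- ruling out $Z\cong\Z_p$ --- to be ``the main technical difficulty'' and leave it unresolved, layering in unnecessary scaffolding (virtual $\Z_p$-ness, $p$-adic analyticity, nilpotency, Golod--Shafarevich, the $\Z_p$-rank of $\G_S^{ab}$) on the way. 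This is where the proposal has a genuine gap.

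The missing observation is elementary class field theory: a finitely generated abelian pro-$p$ extension of a number field ramified only at a finite \emph{tame} set $S$ is automatically finite. Indeed $\Gal(\L_0^{S,\mathrm{ab}}/\L_0)$, the Galois group of the maximal abelian pro-$p$ extension of $\L_0$ unramified outside $S$, is a quotient of the $p$-part of the ray class group of conductor $\prod_{\p\in S}\p$, and each tame inertia contribution $(\O_{\L_0}/\P)^\times\otimes\Z_p$ is a finite cyclic $p$-group while $\Cl(\L_0)\otimes\Z_p$ is finite; hence the whole group is finite. (Equivalently: a $\Z_p$-extension has infinitely deep inertia at some prime, which must then divide $p$; this is impossible over a tame $S$.) Once you invoke this, $Z$ is finite and $\Gamma$ is finite, which is precisely how the paper closes the argument in one sentence. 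Your appeal to ``$\Gamma$ is virtually $\Z_p$, hence $p$-adic analytic and nilpotent'' and to the Golod--Shafarevich hypothesis is not needed and does not by itself produce the contradiction; the tameness of $S$ is the only input required for the final step.
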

 
\begin{proof}
Clearly $\# \Gamma' = \infty \implies \# \Gamma =\infty$.

Let $\N$ and $\N'$ be the kernels of the maps
$\G_S \twoheadrightarrow \Gamma$ and $\G_S \twoheadrightarrow \Gamma'$.
If $\Gamma'$ is finite   then $\K^{\N'}_S$ is a number field and 
$\K^{\N}_S/\K^{\N'}_S$ is a finitely generated  tamely ramified abelian  $p$-extension.
By class field theory such extensions are always finite so $\K^\N_S/\K$ is finite and thus $\Gamma$ is finite.
\end{proof}
 
 This situation shows that some cuts may be not optimal.

\subsection{Main result}
\begin{defi}
Let $\K$ be a number field and let  $\L/\K$ be a (possibly infinite) algebraic extension. 
The {\it root discriminant} of $\K$ is $\rd_\K := |\rm{Disc}(\K)|^{1/[\K:\mathbb Q]}$. 
The {\it root discriminant} of $\L/\K$ is $\limsup_{\J} |\rm{Disc}(\J)|^{1/[\J:\K]}$ where $\L\supset \J\supset \K$ and
$[\J:\K] < \infty$.
\end{defi}

\begin{defi}
An infinite extension $\L/\K$ is called {\it asymptotically good}  if its root discriminant is finite. 
\end{defi}

Using Proposition \ref{prop:cutting}, we will exhibit  
an asymptotically good extension ${\tilde \K}/\K$
  in which the set of primes that split completely is infinite. 
 \smallskip

\begin{theo} \label{theo:ihara} Let $\K$ be a number field, and $S$ be a finite set of places of $\K$ coprime to $p$.  Suppose  that $d_p \G_S > \alpha_{\K,S}$. Then there exists an  infinite  pro-$p$ extension ${\tilde \K}/\K$ in $\K_S/\K$ for which the  set of primes that split completely  is infinite. 
\end{theo}

\begin{rema}
As in Theorem~\ref{theo:wilson},  the pro-$p$ group $\Gal({\tilde \K}/\K)$ is not finitely presented.
\end{rema}

\begin{proof}
The proof is almost identical to that of Theorem~\ref{theo:wilson}.
Let $1\longrightarrow \R \longrightarrow \F \stackrel{\varphi}{\longrightarrow} \G_S \longrightarrow 1$ be a minimal presentation of $\G_S$.
By hypothesis 
$r<d^2/4$. 
  Take $P_{\PP}(t)=1-dt+rt^2$ as the Golod-Shafarevich polynomial, and note that $P_{\PP}(d/2r)=1-d^2/4r <0$.
   We will apply Proposition \ref{prop:cutting} $(ii)$ with $t_0=d/2r \in ]0,1[$ and $k'$ as given there. We will take the quotient by infinitely many Frobenii $x_i$
   of unramified primes whose depth is at least $k'+i$ in $\G_S$.
For $i\geq 2$, denote by $\G_i$ the image $\varphi(\F_i)$;  Proposition~\ref{lemma:Zassenhausfiltration} gives that  $\G_i$ is also the Zassenhaus filtration of $\G_S$.
Now, for $i \geq 0$,  choose a prime ideal $\p_i$ of $\O_\K$ such that  its Frobenius $x_i \in \G_S$ is  in  $\G_{k'+i}$  (in fact a conjugacy class there), and such that $\p_i\notin \{\p_0, \cdots, \p_{i-1}\}$. Choose $y_i\in \F_{k'+i}$ such that $\varphi(y_i)=x_i$
so 
$\omega_\F(y_i) \geq k'+i$.  
The quotient $\Gamma$ of $\G_S$ by the normal subgroup generated by the Frobenius~$x_i$ of the primes $\p_i$, $i\geq 0$ is 
$$\Gamma \simeq \G_S/\langle x_i, i \rangle^{\rm N} \simeq \F/\langle \R,y_i, i \rangle^{\rm N}.$$
Denote by ${\tilde \K} \subset \K_S$ the fixed field by $\langle x_i,\ i\geq 0 \rangle^{\rm N}$; $\Gal({\tilde \K}/\K) \simeq \Gamma$. By Proposition \ref{prop:cutting} $(ii)$, the pro-$p$ extension ${\tilde \K}/\K$ is infinite, and
each prime $\p_i$ has trivial Frobenius in~${\tilde \K}$ and thus splits completely.
\end{proof}

\begin{rema}
Theorem \ref{theo:ihara} and Theorem \ref{theo:wilson} are particulary interesting in the context of Tsfasman-Vladut  \cite{TV}. See also Lebacque \cite{Lebacque}.
\end{rema}

\subsection{The quantity of splitting}

\begin{defi}
For a (possibly infinite) Galois extension  $\L/\K$ of a number field $\K$, denote by 
$\SS_{\L/\K}=\{  \p \subset \O_\K \mid  \p \ {\rm a \ prime \ ideal,} \ \p  {\rm \ splits \ completely \ in  \ } \L/\K\}$, and for $X \geq 0$
 $$\displaystyle{\SS_{\L/\K}(X):= \{\p \in \SS_{\L/\K}, \ \N(\p)\leq X\}}.$$
Put
$$\pi_{\L/\K}(X)=|\SS_{\L/\K}(X)| \cdot$$
\end{defi}

The effective version of Chebotarev's Theorem allows us to  give an upper bound for  $\pi_{\L/\K}(X)$ when the extension
 $\L/\K$ is asymptotically good. Indeed:

\begin{prop}
If  $\L/\K$ is asymptotically good, then (assuming the  GRH), 
$$\pi_{\L/K}(X) \ll X^{1/2}\log X \cdot$$ 
\end{prop}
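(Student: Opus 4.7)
The plan is to combine the uniform root-discriminant bound along $\L/\K$ with the GRH-conditional effective Chebotarev density theorem of Lagarias--Odlyzko, applied to finite Galois subextensions $\J/\K$ of $\L/\K$, then let $\J$ grow with $X$.

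First I would fix a constant $A<\infty$ with $\rd_{\L}\leq A$, which exists by the asymptotic-goodness hypothesis; then every finite Galois subextension $\K\subset \J\subset \L$ inherits $\rd_{\J}\leq A$, equivalently
$$\log |\disc(\J)| \leq [\J:\Q]\log A.$$
Next, I would apply the GRH-conditional Lagarias--Odlyzko bound to the identity conjugacy class in $\Gal(\J/\K)$:
$$\pi_{\J/\K}(X) = \frac{\mathrm{Li}(X)}{[\J:\K]} + O\!\left(\frac{X^{1/2}}{[\J:\K]}\bigl(\log|\disc(\J)| + [\J:\Q]\log X\bigr)\right).$$
Substituting the root-discriminant bound and using $[\J:\Q]/[\J:\K] = [\K:\Q]$, the error collapses to $O\bigl([\K:\Q]\, X^{1/2}\log X\bigr)$, with implied constant depending only on $A$ and $[\K:\Q]$. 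Since any prime of $\K$ splitting completely in $\L/\K$ also splits completely in $\J/\K$, I obtain
$$\pi_{\L/\K}(X) \leq \pi_{\J/\K}(X) \leq \frac{\mathrm{Li}(X)}{[\J:\K]} + C_1\, X^{1/2}\log X,$$
for a constant $C_1 = C_1(\K,A)$.

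Finally, I would exploit that $\Gal(\L/\K)$ is infinite to choose, for each $X\geq 2$, a finite Galois subextension $\J=\J_X$ of $\L/\K$ with $[\J_X:\K]\geq X^{1/2}$; such $\J_X$ exist because $\Gal(\L/\K)$ is profinite and infinite, so it admits open normal subgroups of arbitrarily large finite index. With this choice,
$$\frac{\mathrm{Li}(X)}{[\J_X:\K]} \ll \frac{X^{1/2}}{\log X},$$
which is absorbed by the $X^{1/2}\log X$ error, giving $\pi_{\L/\K}(X)\ll X^{1/2}\log X$.

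The main thing to verify is that the dependence on $[\J:\Q]$ and $\log|\disc(\J)|$ in the Lagarias--Odlyzko error is precisely compensated by the factor $1/[\J:\K]$; this succeeds exactly because asymptotic goodness bounds the ratio $\log|\disc(\J)|/[\J:\Q]$ uniformly as $\J$ ranges over finite subextensions of $\L/\K$. Once this observation is in hand, the rest is a routine use of the unboundedness of $[\J:\K]$ within the infinite extension $\L/\K$.
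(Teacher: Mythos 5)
Your proposal is correct and takes essentially the same route as the paper: the paper's one-line proof is "pass to the limit in Théorème 4 of Serre [\S 2.4]", and that theorem is precisely the GRH-conditional effective Chebotarev estimate you invoke; you have simply spelled out the limiting argument, namely bounding $\pi_{\L/\K}(X)\le\pi_{\J/\K}(X)$ for finite Galois $\J\subset\L$, using the uniform root-discriminant bound to control $\log|\disc(\J)|$ in terms of $[\J:\Q]$, and letting $[\J:\K]$ grow with $X$ to kill the main term.
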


\begin{proof}
Pass to the limit  Theorem 4 of \cite[\S 2.4]{Serre}.
\end{proof}

Recall an another  estimate given by   Ihara \cite{Ihara}. See also Tsfasman-Vladut \cite{TV}.

\begin{theo} \label{theo-Ihara}
Let $\L/\K$ be an infinite asymptotically good extension. Then (assuming the GRH), 
 $$\lim_{X \rightarrow \infty} \sum_{\p \in \SS_{\L/\K}(X)}\frac{\log \N(\p)}{\sqrt{\N(\p)}}   < \infty \cdot$$
\end{theo}

Suppose $\G_S$ tame and  infinite. Denote by $\G_i$ the Zassenhaus filtration of $\G_S$. Suppose moreover that  $r<d^2/4$: the pro-$p$ group $\G_S$ is  not analytic and  then $\G_i \neq \G_{i+1}$ for all $i$ (see \cite[Chapter 11, Theorem 11.4]{DSMN}). 

\begin{rema} When $\G_S$ is tame and infinite,  by the tame Fontaine-Mazur conjecture \cite[Conjecture (5a)]{FM} $\G_S$ must not be  analytic and then $\G_i \neq \G_{i+1}$ for all $i$.
\end{rema}

\begin{defi} Let  $\G:=\G_S$ tame and infinite. For a prime $\p \notin S$, denote by  $x_\p$ the Frobenius at~$\p$ in~$\G_S$.
Define for $i\geq 1$, $$\N_i:=\min \{\N(\p), x_\p \in \G_i\backslash \G_{i+1} \}.$$
\end{defi}

Recall  pro-$p$ extensions of a number field that are tamely ramified at a finite set of places are always asymptotically good.
One can produce some asymptotic good extensions where the set of splitting is infinite, 
and in particular, by our construction, the  series   $\displaystyle{\sum_{i \geq 2} \frac{\log \N_i}{\sqrt{\N_i}}}$ converges.

\begin{theo}
Let $\K_S/\K$ be a pro-$p$ and tame extension for which $d_p\G_S \geq \alpha_{\K,S}$. Then, considering the Zassenhaus filtration $\G_i$ of $\G:=\G_S$, one has along the tower $\K_S/\K$  the estimate: $$\N_i \gg i^{2}.$$
\end{theo}

One can say more when $d < r$ and $r<d^2/4 $, that is when $2\sqrt{r} <d< r$.

\begin{defi} Set $\G:=\G_S$ tame and infinite, and for a prime $\p$ denote by  $x_\p$ the Frobenius at $\p$ in $\G$.
Define for $i\geq 1$, and $k \in \Z_{\geq 1}, $ $$\N_i^{(k)}:={\rm the \ kth \ smallest \  norm \ of \ a \ prime \ } \p {\rm \ with} \  x_\p \in \G_i \backslash \G_{i+1}.$$ 
Of course, $\N_i^{(1)}= \N_{i}$.
\end{defi}

\begin{theo} Assume the GRH. 
Let $\G_S$ be the Galois group of a tame $p$-tower for which~$r<d^2/4 $. Choose $\lambda$ and $m$ as for  Lemma \ref{prop:cutting2}.  Put $\beta_{k,m}:= \lambda^{k+m}$.
Then $$\N_k^{(\beta_{k,m})} \gg \lambda^{2k}.$$ 
\end{theo}

\begin{proof}  Set $\G:=\G_S$. Observe that here $r -d \geq 0$.
For $k\geq 0$, let us choose $\lambda^{k+m} $ different prime ideals $\p_{i,k} \subset \O_\K$ (of smallest norm as possible) such that $x_{\p_{i,k}} \in \G_{m+k}\backslash \G_{m+k+1}$. 
The element $x_{\p_{i,k}} $ is  of depth $m+k$.
Denote by $\tilde{\K}:=\K_S^{\langle \varphi(y_{i,k}),\ i,k\rangle^{\rm N}}$.
Then  Lemma~\ref{prop:cutting2} implies that  $\tilde{\K}/\K$ is infinite: it is   an asymptotically good extension where each prime $\p_{i,k}$ splits completely.
Put $\beta_{k,m}:= \lambda^{k+m}$.
Then by the estimation of Ihara (Theorem~\ref{theo-Ihara}) for 
$\tilde{\K}/\K$, one has: $$\sum_{k \geq 0} \lambda^{k+m} \frac{\log \N_k^{(\beta_{k,m})}}{\sqrt{\N_k^{(\beta_{k,m})}} }< \infty,$$
which implies that $\N_k^{(\beta_{k,m})} \gg \lambda^{2k+2m}$.
\end{proof}

%%%%%%%%%%%%%%%%%%%%%%%%%%%%%%%%%%%%%%%%%%%%%%%%%%%%%%%%%%%ù
%%%%%%%%%%%%%%%%%%%%%%%%%%%%%%%%%%%%%%%%%%%%%%%%%%%%%%%%%%%ù
%%%%%%%%%%%%%%%%%%%%%%%%%%%%%%%%%%%%%%%%%%%%%%%%%%%%%%%%%%%ù

\section{The constants of Martinet}
In this section we set new records for root discriminants in asymptotically good totally complex and totally real towers.

\subsection{Tame towers with finite ramification-exponent} 
%As usual $\K$ is a number field.
We will  again use Proposition \ref{prop:cutting}. The set $S$ will consist of $\p$
 with $\N(\p) \equiv 1 \, {\rm mod} \ p$.  Recall that $d=d_p H^1(\G_S)=d_p \G_S$ is the $p$-rank of $\G_S$ and  $r=d_p H^2(\G_S)$ is the minimal  number of relations of $\G_S$.

\begin{defi} Fix $k\geq 1$.
Denote by $\K_S^{[k]}/\K$ the maximal pro-$p$ extension of $\K$ unramified outside $S$ and where the exponent  of ramification at $\p\in S$ is at most $p^{k}$
so $\K_S^{[\infty]} =\K_S$.
Put $\G_S^{[k]}:=\Gal(\K_S^{[k]}/\K)$.
\end{defi}

\begin{rema}
The extension $\K_S^{[k]}/\K$ is well-defined because  inertia  groups  are  cyclic in the tame case.
\end{rema}

\begin{prop} \label{prop;exponent}
Assume that $r<d^2/4 $. Put $k_0=\lceil  \log ( \frac{d^2}{4r}-1)/\log(d/2r)\rceil$.
 Then, for $k\geq \log_p(k_0)$, the extension $\K_S^{[k]}/\K$ is infinite.
\end{prop}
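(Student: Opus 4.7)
The plan is to realize $\G_S^{[k]}$ as the explicit quotient of $\G_S$ obtained by killing the $p^k$-th powers of the tame inertia generators, and then to invoke the cutting criterion of Proposition \ref{prop:cutting}(i) together with Vinberg's Theorem \ref{theo:GS}.

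First I would fix a minimal presentation $1 \to \R \to \F \stackrel{\varphi}{\longrightarrow} \G_S \to 1$. For each $\p \in S$, let $\tau_\p \in \G_S$ be a generator of the procyclic tame inertia at $\p$, and by Proposition \ref{lemma:Zassenhausfiltration} lift it to $y_\p \in \F$ with $\omega_\F(y_\p) = \omega_\G(\tau_\p) \geq 1$. Tameness forces the ramification exponent at $\p$ in a pro-$p$ subextension of $\K_S/\K$ to equal the order of the image of $\tau_\p$, so the constraint ``exponent at most $p^k$'' is equivalent to the relation $\tau_\p^{p^k} = 1$. Hence
$$\G_S^{[k]} \simeq \F\big/\bigl\langle\R,\, y_\p^{p^k} : \p \in S\bigr\rangle^\N,$$
and Proposition \ref{prop:proprietes-omega}(i) gives $\omega_\F(y_\p^{p^k}) \geq p^k \cdot \omega_\F(y_\p) \geq p^k$.

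Next I would work with the Golod--Shafarevich polynomial $P_\PP(t) = 1 - dt + rt^2$ of $\G_S$ and its critical point $t_0 = d/(2r) \in (0,1)$, where $P_\PP(t_0) = 1 - d^2/(4r) < 0$ by hypothesis, with slack $d^2/(4r) - 1$. The defining formula for $k_0$ is exactly the one solving $t_0^{k_0} = d^2/(4r) - 1$, so the ceiling yields $(d/(2r))^{k_0} \leq d^2/(4r) - 1$. For $k \geq \log_p(k_0)$, i.e.\ $p^k \geq k_0$, adding a cutting relation of depth $\geq p^k$ preserves non-positivity of the polynomial at $t_0$, and the strictness built into the ceiling gives strict negativity. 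Applying Proposition \ref{prop:cutting}(i) and Theorem \ref{theo:GS} then forces $\G_S^{[k]}$ to be infinite, and therefore $\K_S^{[k]}/\K$ is infinite.

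The main technical subtlety is that $|S|$ relations are being adjoined rather than a single one: the cumulative Golod--Shafarevich polynomial is bounded above by $P_\PP(t) + |S|\cdot t^{p^k}$, and what must be verified is that the aggregated contribution $|S|\cdot t_0^{p^k}$ is absorbed by the slack $d^2/(4r) - 1$. The threshold $k_0$ is tailored to balance the exponential depth decay $t_0^{p^k}$ against this slack; the delicate step is checking that this balance survives when the single-relation cutting is iterated over all $\p \in S$, possibly with a minor adjustment of the threshold of the form $p^k \geq k_0 + \log_{2r/d}(|S|)$ to accommodate the linear growth in $|S|$.
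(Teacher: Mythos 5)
Your proof is correct and follows essentially the same route as the paper: lift each tame inertia generator to depth $\geq 1$ in $\F$, note its $p^k$-th power has depth $\geq p^k \geq k_0$ by Proposition \ref{prop:proprietes-omega}(i), and check that $1-dt+rt^2+t^{k_0}$ is non-positive at $t_0=d/2r$ so that Proposition \ref{prop:cutting}(i) applies. The $|S|>1$ subtlety you flag is genuine but equally present in the paper, whose proof cuts by a single $x^{p^k}$ and implicitly takes $S=\{\p\}$ (as in all the examples); note also that the ceiling only guarantees $P(t_0)+t_0^{k_0}\le 0$ rather than strict negativity, but $\le 0$ already suffices for infiniteness by Theorem \ref{theo:GS}.
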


\begin{proof} 
We follow the notations of Proposition \ref{prop:cutting}.
We have chosen $k_0$ so that $P_{\PP'}(t) = 1-dt+rt^2+t^{k_0}$ is negative at $t=d/2r$. 
Take as  $x$ a generator of the inertia group  at $\p$ in $\K_S/\K$, cut by $x^{p^k}$ and
apply Proposition \ref{prop:cutting} $(i)$.
\end{proof}

Recall the root discriminant of a number field $\K$ is denoted by  $\rd_\K$.
  The interest of extensions as above is the following:
  
  \begin{prop}
  In the tower $\K_S^{[k]}/\K$ the root discriminant is bounded by $$\rd_\K \cdot  \big(\N(\p)^{\frac{1}{[\K:\Q]}}\big)^{1-\frac{1}{p^{k}}}.$$  
  \end{prop}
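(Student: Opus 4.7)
The plan is to prove a uniform bound on $\rd_\L$ for every number field $\L$ with $\K \subseteq \L \subseteq \K_S^{[k]}$ and then pass to the $\limsup$. Using the tower formula $|\mathrm{Disc}(\L/\Q)| = |\mathrm{Disc}(\K/\Q)|^{[\L:\K]}\cdot \N_{\K/\Q}(\mathfrak{d}_{\L/\K})$ and extracting the $[\L:\Q]$-th root,
$$\rd_\L \,=\, \rd_\K \cdot \N_{\K/\Q}(\mathfrak{d}_{\L/\K})^{1/[\L:\Q]},$$
so the task reduces to estimating the local contribution of each $\p\in S$ to $\N_{\K/\Q}(\mathfrak{d}_{\L/\K})$.

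Next, since $S$ is tame, for each prime $\P$ of $\L$ above $\p\in S$ Dedekind's theorem gives the local different $\P^{e_\P-1}$, where $e_\P\mid p^k$ by the very definition of $\K_S^{[k]}$. Taking norms to $\Q$, the $\p$-part of $\N_{\K/\Q}(\mathfrak{d}_{\L/\K})$ is $\N(\p)^{N_\p}$ with
$$N_\p \,=\, \sum_{\P\mid \p} f_\P(e_\P-1) \,=\, [\L:\K] \,-\, \sum_{\P\mid \p} f_\P,$$
using the fundamental identity $\sum_{\P\mid\p} e_\P f_\P=[\L:\K]$. The bound $e_\P\le p^k$ then yields $\sum_{\P\mid\p} f_\P \ge [\L:\K]/p^k$, hence
$$N_\p \,\le\, [\L:\K]\bigl(1-p^{-k}\bigr).$$

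Dividing by $[\L:\Q]=[\L:\K]\,[\K:\Q]$ in the exponent gives the per-prime bound
$$\N(\p)^{N_\p/[\L:\Q]} \,\le\, \bigl(\N(\p)^{1/[\K:\Q]}\bigr)^{1-1/p^k},$$
which is exactly the factor claimed (and one multiplies these together if $S$ contains several primes). Combining with the tower formula yields a uniform bound on $\rd_\L$ independent of $\L$, and taking the $\limsup$ gives the stated estimate for $\K_S^{[k]}/\K$.

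No step presents a real obstacle; the argument is purely local computation in tame ramification theory. The only care needed is to keep track of the exponents $e_\P,f_\P$ and to invoke the maximal-exponent condition $e_\P\le p^k$ at the right moment; everything else is book-keeping.
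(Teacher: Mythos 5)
Your proof is correct and follows the only reasonable route: the paper's own proof is the one-line ``The result follows from the basic theory of tame ramification,'' and what you have done is fill in exactly those details (tower formula for discriminants, Dedekind's different exponent $e_\P-1$ in the tame case, the fundamental identity $\sum e_\P f_\P = [\L:\K]$, and the bound $e_\P \leq p^k$ coming from the cyclicity of tame inertia). The bookkeeping is accurate, including the observation that one multiplies per-prime factors when $|S|>1$.
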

  
\begin{proof}    The result follows from the basic theory of tame ramification.
\end{proof}  
  In
  \cite{Hajir-Maire-Compositio} it is shown, by taking the limit in the above Proposition, that the root discriminant of
  $\K_S/\K= \K_S^{[\infty]}/\K$
   is bounded by $\rd_\K \cdot  \big(\N(\p)^{\frac{1}{[\K:\Q]}}\big).$

 We can now give  an answer to a central question of \cite{Hajir-Maire-JA}:
 
 \smallskip
 
\begin{theo}\label{theorem:unramifiedsubtower}
Suppose  $S\neq \emptyset$ such that $d_p \G_S  > \alpha_{\K,S}$. Then there exists a finite extension  $\L/\K$ in $\K_S/\K$ such that $\L_\emptyset/\L$ is infinite. 
\end{theo}

  \begin{proof} Observe that {\it wlog} we can assume that $S=\{\p\}$ contains only one prime.
  By hypothesis, $r<d^2/4$, so for large $k$, the extension $\K_S^{[k]}/\K$ is infinite. The  inertia  group  at~$\p$ is a quotient of $\Z/p^k\Z$. By changing the base field, there exists a finite extension $\L/\K$ such that $\K_S^{[k]}/\L$ is unramified and infinite.
\end{proof}

   \subsection{Some set up}

Let $\K$ be a number field and $S$ a finite set of  finite  places of $\K$. For $v\in S$ let $\K_v$ be the completion
of $\K$ at $v$ and recall $\delta_{\F,p}=1$ if $\F$ contains  the $p$th roots of unity and
$\delta_{\F,p}=0$ otherwise.

\smallskip
Let $$\V_{S} =\{x \in \K^\times \mid x \in \K^{\times p}_v \mbox{ for } v\in S, \  \mbox{and } v(x) \equiv 0\mbox{ mod } p ,\forall \, v \}$$
and let  $\CyB_S$ to be the character group of $\V_{S}/\K^{\times p}$.
Recall the exact sequence
$$ 0\to \sha^2_S \to H^2(\G_S) \to \oplus_{v\in S}H^2(G_{\K_v})$$
where each term on the right is just  $\Z/p\Z$ or $0$ depending on $\delta_{\K_v,p}=1$ or $0$; observe also that  when $\delta_{\K,p} \neq 0$,  global reciprocity implies
the image of the right map lies in the hyperplane of terms that sum to zero.

\smallskip
From Chapter 11 of \cite{Koch} we know 
\begin{equation}\label{eq:h1rank}
d_p \G_S =\left(  \sum_{v\in S_p}[\K_v:\Q_v]  \right) -\delta_{\K,p} +\left( \sum_{v\in S} \delta_{\K_v,p} \right) -(r_1+r_2) +1 +d_p (\CyB_S)
\end{equation}
and there is a natural injection
$  \sha^2_S \hookrightarrow \CyB_S$ which is an isomorphism if
$S$ contains all primes of $\K$ dividing $p$.

\begin{rema}
Numerically showing the injection above is {\it not} an isomorphism
in explicit tame cases would likely lead to strong improvements in root discriminant bounds in asymptotically good towers.
\end{rema}

When $S$ is tame  software will allow us to explicitly compute $d_p \G_S$ in many cases, thus giving $d_p  \CyB_S$  
exactly and the upper bound
$ r(\G_S) \leq \left\{ \begin{array}{cc} d_p \CyB_\emptyset & S=\emptyset \\ d_p \CyB_S +|S| -\delta_{\K,p} & S \neq \emptyset \end{array}\right. .$

 \subsection{Examples and records} 
 For various computations of $H^1$s and ray class groups we have used the software packages  PARI/GP  \cite{pari}  and MAGMA \cite{Magma}.
 We take always $p=2$ in this subsection.

\subsubsection{An example of J. Martin}
  In his Ph.D. thesis, \cite{Martin}, Martin found a degree $8$ totally real number field $\K$ whose $2$-class group has rank $8$. Equation~\eqref{eq:h1rank}
  gives that $\dim \CyB_\emptyset =16$ so $ \dim H^2(\G_\emptyset) =\dim \sha^2_\emptyset  \leq \dim \CyB_\emptyset =16$. The Golod-Shafarevich polynomial is 
  (at worst) $P(t)=1-8t+16t^2$. Note $P(1/4) = 0$  so $\G_\emptyset$ is infinite. As Martin's thesis is unpublished,
  we record his polynomial here: $x^8 - 3297x^6 + 14790x^5 + 3555341x^4 - 24457440x^3 - 1347361755x^2 + 7744222350x +149856133975$.
  Its discriminant is $3^4\cdot5^4\cdot7^4\cdot 13^2\cdot 29^4 \cdot 53^2 \cdot 109^2$ and the root discriminant is less than
  $913.4927$.
  \color{black}
  \subsubsection{The totally complex case}
  
  \

  $\bullet$ Take $\K=\Q(\sqrt{13},\sqrt{-3 \cdot 5 \cdot 17})$. Software gives
  $d_2 \G_\emptyset=4$ so Equation~(\ref{eq:h1rank}) gives $d_2  \CyB_\emptyset =6$. The Golod-Shafarevich polynomial
  $1-4t+6t^2$ has no root so we cannot conclude $G_\emptyset$ is infinite. 
  There are two primes above $43$ in $\K$, both having norm $43^2$.
 Take $S$ to be either of these. We write $S=\{\p_{43^2}\}$.
 Software gives that $d_2 \G_S=5$ so $d_2 \CyB_S =6$ and $r(\G_S)  \leq 6+1-1=6$. In this case the 
   Golod-Shafarevich polynomial
  $1-5t+6t^2$ has a root in $]0,1[$.
  As $d_2 \G_\emptyset=4 < 5 = d_2 \G_S$,
the generator of inertia $\tau_{\p_{43^2} }\in \G_S$ has depth~$1$ by  Proposition \ref{corollary:detect} $(iii)$.  We cut $\G_S$ by the relation $\tau_{\p_{43^2}}^4$ which has
depth at least~$4$ by Proposition~\ref{prop:proprietes-omega}.  As $1-5t^2+6t^2+t^4$ has a root in $]0,1[$, the group   $\G_S^{[2]}$ is infinite, and in this tower one has: 
$$\rd_{\K_S^{[2]}} = \rd_\K \cdot (43^2)^{\frac14 \cdot (1-\frac14)}   < 235.9351.$$
  This is not close to the record of $82.1004$ in  \cite{Hajir-Maire-JSC} .

\

  $\bullet$
  Take the number field $\K$
  with polynomial $x^{12} + 138x^{10} - x^9 + 6579x^8 - 1191x^7 + 142088x^6 - 78327x^5 + 1495530x^4 - 1492094x^3 + 8549064x^2 - 6548187x + 27239851$.
  The field is totally complex and software gives
  $$\rd_\K < 75.7332,\,\,d_2 G_\emptyset=7\mbox{ and }d_2  \CyB_\emptyset =13.$$ 
  The Golod-Shafarevich polynomial
  $1-7t+13t^2$ has no root so we set $S$ to be the one
  prime $\p_{9}$ above $3$. It has norm $9$ and software gives $d_2 \G_{S}=d_2 \G_\emptyset=7$ so $\tau_{\p_{9}}$ 
  is of  depth at least~$2$ by  Proposition~\ref{corollary:detect} $(iii)$  and
 $\tau^2_{\p_{9}}$ has depth at least $4$.
  One sees that $d_2 \CyB_S=12$ and $r(\G_S) \leq 12$. As $1-7t+12t^2$ has a root in  $]0,1[$, $G_S$ is infinite. 
After cutting by $\tau_{\p_{9}}$, our Golod-Shafarevich polynomial
  $1-7t+12t^2+t^4$ has a root in  $]0,1[$ so $\G_S^{[1]}$  is infinite and
  $$\rd_{\K_S^{[1]}} = \rd_\K \cdot (9)^{\frac1{12} \cdot (1-\frac12)}   < 82.9940.$$
 This is quite close to the record of $82.1004$ of \cite{Hajir-Maire-JSC}.

 \
  
  $\bullet$  
  In this example we establish a new record by cutting the old one.
  Consider the totally complex number field $\K$ of degree $12$ in \cite{Hajir-Maire-JSC} with 
polynomial $x^{12}+339x^{10}-19752x^8-2188735x^6+284236829x^4+4401349506x^2+15622982921$.
 Let $\HH$  be the Hilbert Class Field of 
$\K$.
Software yields that $\Gal(\HH / \K) \simeq (\Z/2)^6$ so 
$$d_2 \G_\emptyset=6,\,\, d_2 \CyB_\emptyset = 6+r_2=6+6=12 \mbox{ and } r(G_\emptyset) \leq 12.$$
The polynomial $1-6t+12t^2$ is always positive so we cannot conclude
that the maximal pro-$2$ quotient of $\G_\emptyset$ is infinite. Here $\rd_{\K} < 68.3636$.
 Now take $S=\{\p_{9}\}$, the unique prime above $3$ of norm $9$. Software gives  $d_2 \G_S=7$ so $d_2 \CyB_S=12$ and 
 we have the bound 
 $r(\G_S) \leq 12+1-1=12$.  The polynomial $1-7t+12t^2$ has a root in $]0,1[$ so $G_S$ is infinite.
   As $d_2 \G_\emptyset=6 < 7 = d_2 \G_S$,
 $\tau_{\p_{9}}$ has depth $1$. We cut by $\tau^4_{\p_{9}}$ to get 
 Golod-Shafarevich  polynomial
  $1-7t+12t^2+t^4$ which has a root in $]0,1[$
so 
$\G_S^{[2]}$ is infinite, and in this tower 
$$\rd_{\K_S^{[2]}} = \rd_\K \cdot (9)^{\frac1{12} \cdot (1-\frac14)}   < 78.4269.$$
  This is a new record with savings  a factor of  $3^{1/24} \approx 1.04683 \dots$
  
  \

  \subsubsection{The totally real case}
 
 \
 
 $\bullet$  We establish a new record here as well.
 Let $\K$ be the  totally real field of \cite{Hajir-Maire-JSC} of degree $12$ over $\Q$.
 It's polynomial is $x^{12}-56966x^{10}+959048181x^8-5946482981439x^6+14419821937918124x^4-12705425979835529941x^2+3527053069602078368989$
 and $rd_K < 770.6432$.  
 All primes above $13$ in $\K$ have norm $13$. Take $S$ to be any one of them. Software gives 
$$d_2 \G_\emptyset=d_2 \G_S=9,\,\, d_2 \CyB_\emptyset =21 \mbox{ and } d_2 \CyB_S =20.$$
The Golod-Shafarevich polynomial for $\G_\emptyset$ is 
  $1-9t+21t^2$ and has no root in  $]0,1[$ so we cannot conclude $G_\emptyset$ is infinite, though we suspect it is.
The Golod-Shafarevich polynomial for $\G_S$ is  
 $1-9t+20t^2$ which  has a root in  $]0,1[$.
 As $d_2 G_\emptyset = d_2 \G_S=9$ we see $\tau_{\p_{13}}$ has depth at least $2$ by Proposition \ref{corollary:detect} $(iii)$.  We cut by $\tau^2_{\p_{13}}$ which has depth  at least $4$. 
  As $1-9t+20t^2+t^4$ has a root in  $]0,1[$ 
  $\K_S^{[1]}/\K$ is infinite and
  $$\rd_{\K_S^{[1]}} = \rd_\K \cdot (13)^{\frac1{12} \cdot (1-\frac12)}   < 857.5662.$$
This is a new record with savings by a factor of $13^{1/24} \approx 1.11279 \dots$

\subsubsection{Comments}

In the example above, a hope would be that  $\tau_{\p_{13}}$ has depth greater than two in $\G_S$. In that case we could
 cut by  the relation $\tau_{\p_{13}}$ and the corresponding Golod-Shafarevich polynomial would be at most
  $1-9t+20t^2 +t^3$ which has a root in $]0,1[$. One would then have that $\K$ has infinite $2$-Hilbert Class Field
  Tower and the totally real root discriminant record would  be $< 770.644$. We do not see how to check the depth
  of $\tau_{\p_{13}}$  in $\G_S$. See also the beginning of $\S 5$.  
\smallskip

The totally complex record was $82.1004$ with a GRH lower bound of $8\pi e^{\gamma} \approx 44.763$.
For the totally real case, the record was  $913.4927$  and the GRH lower bound is $ 8\pi e^{\gamma +\frac{\pi}2} \approx 215.33$.
One should probably take the ratio and then logs to  measure distance to the GRH bounds. 
Then for a number field $\K$, let us define $\partial(\K)=\log(\Rd_\K /\alpha)$ where $\alpha = 44.763$ if $\K$ is totally imaginary or $\alpha=215.33$ if $\K$ is totally real.
Let us recall the different improvements. The ordered pairs in the table below are $(rd_\K,\partial)$.

\medskip

\begin{mdframed}[style=MyFrame]
\begin{center}
\begin{tabular}{l|c|c|c|c}
Signature & Martinet  (1978) & Hajir-Maire (2002) &  Martin (2006) & new records \\
\hline
tot. compl.   &$(92.368; \  0.7244)$ & $(82.1004; \ 0.6066)$ &  & $(78.427; \ 0.5608)$  \\
\hline
tot. real  &$(1058.565;   \  1.592)$ &$(954.293; \  1.488)$  &   $(913.493; \  1.445)$ &  $(857.567;  \  1.382)$  \\
\hline
\end{tabular}
\end{center} 
\end{mdframed}

\medskip 

The recent improvement of $\partial$ in the totally imaginary case  is  $ 7.55\% $, and $4.36\% $ for the totally real case. 
\color{black}

%%%%%%%%%%%%%%%%%%%%%%%%%%%%%%%%%%%%%%%%%%%%%%%%%%%%%%%%%%%ù
%%%%%%%%%%%%%%%%%%%%%%%%%%%%%%%%%%%%%%%%%%%%%%%%%%%%%%%%%%%ù
%%%%%%%%%%%%%%%%%%%%%%%%%%%%%%%%%%%%%%%%%%%%%%%%%%%%%%%%%%%ù

\section{Cutting of wild towers}
  
  \subsection{Local abelian extensions}
   \label{section:principle}
  
 For this section, our results follow from this main observation: {\it we can cut  wildly ramified towers if  we first cut by local commutators}.
  We also assume throughout this section that in our wild extensions, the assertion of Kuz'min's Theorem holds, that is
 the  pro-$2$ local Galois groups above $(2)$ are maximal. In the first totally complex example  of \S \ref{section:paragraph4.2} the hypotheses of Kuz'min's Theorem 
 are satisfied, but we do not include the infinite places in the totally real example. It is possible that less cutting is needed
 in the latter example.

  \begin{defi} Take $S=S_p$ the set of $p$-adic places.
  Denote by $\K_S^{{[k],p-ab}}/\K$ the maximal pro-$p$ extension unramified outside $S$, locally abelian at $p$ (and then at all places),  and for which  the inertia groups at $v|p$ are  of exponent dividing $p^k$. Put $\G_S^{{[k],p-ab}}=\Gal(\K_S^{{[k],p-ab}}/\K)$.
  \end{defi}
  
  Recall that for $S=S_p$, the pro-$p$ group $\G_S$ is of cohomological dimension $2$ and $r(\G_S)=d(\G_S)-r_2-1$. (For $p=2$, $S$ must
  contain all the infinite places, a vacuous condition in the totally complex case).% see \cite{Schmidt-compositio}).

  \

\begin{theo}\label{prop:boundat2} Take $p=2$ and $S=S_p$.
In $\K_S^{[1],p-ab}/\K$, the root discriminant is bounded by 
$ \rd_\K  \cdot  2^{   \frac{\sum_{v|p}{f_v} \left( 2+\frac1{e_v}-\frac{1}{2^{e_vf_v}} \right)  }{[\K:\Q]}     }.$
\end{theo}
\begin{proof} 
Fix a place $v|2$ of $\K$. 
By Kummer's theory, the quadratic extensions of $\K_v$ are parametrized by the classes of 
$\K^\times_v/{\K^\times_v}^2 \simeq \langle  \pi_v \rangle/ \langle \pi_v^2 \rangle \times U_v/U^2_v$ where the latter factor has dimension
$e_vf_v+1$ over $\mathbb F_2$ so the maximal elementary abelian $2$-extension of $\K_v$ has degree $2^{e_vf_v+2}$. We compute its discriminant
over $\K_v$ by using the conductor-discriminant formula, namely we take the product of the conductors of {\it all} quadratic characters
of~$\K_v$.  Note there is exactly one character for each quadratic extension, so the discriminant {\it equals} the conductor in this case.
It is elementary to compute an upper bound on the discriminant of a quadratic field, so by taking the product over all quadratic fields we obtain an
upper bound for our local  discriminant.
There are $2^{e_vf_v+2}-1$ quadratic extensions of $\K$: 
\begin{itemize}
\item[$\bullet$]  $2^{e_vf_v+1}- 1$ extensions corresponding to extracting the square root of a unit.  These have conductor  dividing  $4=\pi^{2e_v}$. One extension  is unramified 
        and has conductor $1$. 
\item[$\bullet$] $2^{e_vf_v+1}$ extensions corresponding to extracting the square root of the uniformizer times a unit. These have conductor $\pi^{2e_v +1}$.
\end{itemize}
For more details see for example \cite[Chapter II, Proposition 1.6.3]{gras}.
Thus the discriminant of the maximal elementary abelian $2$-extension of $\K$ divides
$$(\pi^{2e_v})^{2^{e_vf_v+1}-2} \cdot (\pi^{2e_v+1})^{2^{e_vf_v+1}} = \pi^{e_v\cdot2^{e_vf_v+3} +2^{e_vf_v+1}-4e_v}.$$
Taking the $2^{e_vf_v+2}$th root, we get the root discriminant is
$$(\pi)^{2e_v+\frac12-\frac{e_v}{2^{e_vf_v}}} =(2)^{2+\frac1{2e_v} -\frac1{2^{e_vf_v}}}.$$
This is the local contribution. The norm of $v$ is $2^{f_v}$ so in the global root discriminant we get a the factor
$2^{f_v \left( 2+\frac1{e_v}-\frac{1}{2^{e_vf_v}} \right)   }$. Now sum over $v|p$ and take the $1/[\K:\Q]$th root.
\end{proof}

\begin{rema}
 One has $\G_{S_p}/\langle [D_v,D_v], v|p \rangle \twoheadrightarrow \G_{S_p}^{ab}:=\G_S/[\G_S,\G_S]$ and then the maximal local abelian 
extension  $\K_{S_p}^{p-ab}/\K$ of $\K_{S_p}/\K$  is infinite (it contains the cyclotomic extension). In order to have a criteria proving that $\K_S^{{[k],p-ab}}/\K$ is infinite, we 
 need a Golod-Shafarevich polynomial of $\K_S^{p-ab}/\K$  to have a root in $]0,1[$. 
  \end{rema}

\subsection{Examples}  \label{section:paragraph4.2}

\subsubsection{} Take $\K=\Q(\sqrt{-5460})$ and $p=2$. Software gives $\G^{ab}_\emptyset \simeq (\Z/2)^4$, and for $S=\{\p_2\}$, the unique prime above $2$,  
$\G_S^{ab} \simeq \Z_2^2 \times (\Z/2\Z)^3$.  Also $\p_2$ is not principal and thus its Frobenius in $G_\emptyset$ has depth $1$.

In this  totally complex wild case, global duality implies the natural injection $\sha^2_S \hookrightarrow \CyB_S$ from \S 3.2 is an isomorphism and
 $r= r(G_S) = d-1-r_2=3$.

 Recall  that the decomposition group at $\p_2$ in $\G_S$ has at most $4$ generators, as a quotient of $\G_\p=\Gal(\overline{\K}_\p/\K_\p)$  this last 
 group being isomorphic to the Demushkin group with $4$ generators (here $\overline{\K}_\p$ is the maximal pro-$p$ extension of the complete field $\K_\p$). Denote by $\langle x,y,z,t \rangle$ these generators viewed in $\G_S$.
 The structures of  $\G^{ab}_\emptyset=\G_\emptyset/[\G_\emptyset,\G_\emptyset]$ and of $\G_S^{ab}$  show that the elements $x,y,z,t$ can be choosen such that 
 $t$ (Frobenius) has depth $1$ as does $x$, a generator of inertia.  The other variables, $y$ and $z$,  have depth at least $2$.
 Then
 \begin{enumerate}
 \item[$\bullet$]  $[x,t]$ has depth at least $2$,
 \item[$\bullet$] $[x,y]$, $[x,z]$, $[t,y]$, $[t,z]$ have depth at least $3$,
 \item[$\bullet$]  and $[y,z]$ has depth at least $4$. 
 \end{enumerate}
 
 If we cut $\G_S$ by the local commutators, the Golod-Shafarevich polynomial to test becomes:
 $$P(t)=(1-5t+3t^2 ) +(t^2+4t^3+t^4) =   1-5t+4t^2+4t^3+t^4,$$
 which has a root in  $]0,1[$.
Then,  $\G_{S}^{p-ab}:=\Gal(\K_{S}^{p-ab}/\K)$, the maximal pro-$2$ extension of~$\K$ unramified outside $2$ and locally abelian is infinite 
(in fact not $p$-adic analytic).
Now, we can apply Proposition \ref{prop:cutting} to cut the ramification at a certain depth. Here   $\G_S^{[2],p-ab}$ is
 infinite: indeed  the polynomial  $1-5t^2+4t^2+4t^3+4t^4$ has a root in $]0,1[$.
 
 In this case the root discriminant in $ \K_{S}^{2,p-ab}/\K$ is bounded by 
 $\rd_\K \cdot 2^{9/8} <  161.1592  $,
 thanks to Theorem~\ref{prop:boundat2}.

\subsubsection{} In  this example we demonstrate the  effectiveness of using a mixed strategy of simultaneous tame and wild cutting.  Take $\K$ to be  the Hilbert Class  Field of the cyclic cubic extension of conductor $163$. The number field $\K$ is of degree $12$
 over~$\Q$ with equation given by $x^{12} - 23 x^{10} + 125x^8 - 231x^6 + 125x^4 - 23x^2 + 1=0$.  Here for $v|2$, $f_v=3$.
 Take $S=\{v|2, 3, 5, 7\}$.  
 Software and some basic theory give
 \begin{enumerate}
  \item[$\bullet$] there are $4$ primes in $\K$ above above $2$, $3$ and $7$. There are $6$ primes above $5$.
  \item[$\bullet$]   $d_2 \G_S=18$, and  Equation~(\ref{eq:h1rank}) implies  $\CyB_S=0$ so $\sha^2_S=0$ as well
  \item[$\bullet$]  $r(G_S) \leq 17$ so the Golod-Shafarevich polynomial is $1-18t+17t^2$, which is {\it very} negative on $]0,1[$.
  \end{enumerate}
We are going to cut by
 \begin{enumerate}
  \item[$\bullet$] the local commutators of each place above $2$, i.e. by $4\cdot 10$ elements of depth at least~$2$,
  \item[$\bullet$] the square of the generators of the abelian local inertia at the wild places (observe that we can take a generator of order $2$), i.e by $12$ elements of depth 
  at least $2$,
  \item[$\bullet$] the fourth of the generators of the  inertia at three  places dividing $5$,
  \item[$\bullet$] the square of the generators of the inertia of the other eleven places dividing $3\cdot 5\cdot 7$.
 \end{enumerate}
 
 Then the pro-$p$ group of the new quotient has $1-18t+(17+40+12+11)t^2+3t^4$ as polynomial that  has a root in $]0,1[$. Here, one has $$\rd \leq 163^{2/3} \cdot (3 \cdot 7 \cdot 5^{1/2})^{1/2} \cdot (5^{1/2})^{3/4}\cdot 2^{23/8} \color{black}< 2742.95621 \cdots$$
 
 \subsection{Cutting a $p$-rational tower}
 
 \subsubsection{$p$-rational field}
 
 Let us recall the notion of $p$-rational field (see for example \cite{Gras-Jaulent}, \cite{Mova-Nguyen}).
 
 \begin{defi}
 A number field $\K$ is called $p$-rational if the maximal pro-$p$ extension of $\K$ unramified outside $S_p$ is free pro-$p$.
 \end{defi}

In the context of the inverse Galois problem, this notion is also very useful for producing some special pro-$p$ extensions of number fields: see Greenberg \cite{Greenberg},
Hajir-Maire  \cite{Hajir-Maire-mu}, etc.

An easy argument from group theory gives:
 
\begin{prop} \label{prop:prationalheridity}
Let $\K$ be a $p$-rational field and let $\L/\K$ be a finite extension in $\K_{S_p}/\K$. Then $\L$ is $p$-rational.
\end{prop}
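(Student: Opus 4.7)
Set $\G := \Gal(\K_{S_p}/\K)$; by hypothesis this pro-$p$ group is free. The plan is to identify $\Gal(\L_{S_p}/\L)$ with a closed subgroup of $\G$ and then invoke the fact that every closed subgroup of a free pro-$p$ group is itself free pro-$p$ (a classical theorem of Iwasawa/Tate; see e.g.\ the subgroup theorem for free pro-$p$ groups).

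The first substantial step is to verify the equality $\L_{S_p} = \K_{S_p}$. The inclusion $\K_{S_p} \subset \L_{S_p}$ is immediate: $\K_{S_p}/\L$ is unramified outside $S_p$ because $\K_{S_p}/\K$ is, and it is pro-$p$ because $\Gal(\K_{S_p}/\L)$ is a closed subgroup of the pro-$p$ group $\G$. For the reverse inclusion, the key step is a Galois-closure argument: given any finite Galois pro-$p$ extension $M/\L$ unramified outside $S_p$, I will show that its normal closure $N$ over $\K$ still lies inside $\K_{S_p}$.

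To carry out this argument, let $\L'$ be the Galois closure of $\L/\K$. Since $\L \subset \K_{S_p}$ and $\K_{S_p}/\K$ is Galois and pro-$p$, we get $\L' \subset \K_{S_p}$, so $\L'/\K$ is a finite $p$-extension unramified outside $S_p$. Next, form $N$ as the compositum, inside a fixed algebraic closure of $\K$, of all $\K$-conjugates of $M\cdot \L'$. Each conjugate of $M\cdot\L'$ is a pro-$p$ Galois extension of $\L'$ unramified outside $S_p$, so their compositum $N/\L'$ has the same properties. Combined with $\L'/\K$ being pro-$p$ and unramified outside $S_p$, it follows that $N/\K$ is Galois, pro-$p$, and unramified outside $S_p$. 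Hence $N \subset \K_{S_p}$, and in particular $M \subset \K_{S_p}$. Taking the compositum of all such $M$ gives $\L_{S_p} \subset \K_{S_p}$.

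Having established $\L_{S_p} = \K_{S_p}$, we obtain $\Gal(\L_{S_p}/\L) = \Gal(\K_{S_p}/\L)$, which is an open (in particular closed) subgroup of the free pro-$p$ group $\G$. By the subgroup theorem for free pro-$p$ groups it is itself free pro-$p$, so $\L$ is $p$-rational. The main obstacle in this outline is the Galois-closure step, where one must make sure that enlarging $M$ to its normal closure over $\K$ does not destroy either the pro-$p$ property or the ramification constraint; factoring through $\L'$ as above isolates the finite $p$-group $\Gal(\L'/\K)$ and reduces everything to composita of pro-$p$ extensions of a common base field $\L'$.
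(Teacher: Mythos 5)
Your proof is correct and follows exactly the route the paper gestures at when it says "an easy argument from group theory": show that $\L_{S_p}=\K_{S_p}$ (by passing to the Galois closure $\L'$ to control normality, pro-$p$-ness, and ramification over $\K$ all at once), then invoke the theorem that a closed subgroup of a free pro-$p$ group is free pro-$p$. The Galois-closure step you flagged as the main obstacle is indeed the only point requiring care, and your treatment of it is sound.
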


 Assuming Leopoldt's conjecture, it is well-known that $\G_{S_p}$ is pro-$p$ free if and only if $\G_{S_p}^{ab}$ is torsion free.
   The torsion of $\G_{S_p}^{ab}$ can be estimated by class field theory: in particular for $p$ sufficiently large this torsion is isomorphic to the $p$-part of $\left(\prod_{v\in S_p} U_v \right)/\overline{\O_\K^\times}$ which is easy to compute. 
   After many  observations Gras \cite[Conjecture 7.11]{Gras-CJM} made recently  the following conjecture:
   
 \begin{conjecture}[Gras]
 Given  a number field $\K$, then for large $p$,  $\K$ is $p$-rational.
 \end{conjecture}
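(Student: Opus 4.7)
Gras's conjecture is an open problem; any realistic proof proposal is therefore a framework rather than a complete argument. My plan is to reduce $p$-rationality to a concrete primitivity question about global units inside local units, explain why this makes the conclusion plausible for all but finitely many $p$, and identify the obstruction that has so far prevented a proof.

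\textbf{Step 1 (Reduction via class field theory).} I would begin from the characterization noted in the paper just above the conjecture: assuming Leopoldt's conjecture for $\K$ at $p$, $\G_{S_p}$ is pro-$p$ free if and only if its abelianization $\G_{S_p}^{ab}$ is $\Z_p$-torsion free. By global class field theory, and for $p$ not dividing $|\mu(\K)|\cdot h_\K$, the torsion of $\G_{S_p}^{ab}$ is the $p$-part of
\[
T_\K \;:=\; \Bigl(\prod_{v\in S_p} U_v\Bigr) \big/ \overline{\O_\K^\times},
\]
where $\O_\K^\times$ is diagonally embedded and the overline denotes $\Z_p$-closure. The numerator has $\Z_p$-rank $[\K:\Q]$ while $\O_\K^\times$ has $\Z$-rank $r_1+r_2-1$, so the quotient has generic $\Z_p$-rank $r_2+1$, and $\K$ is $p$-rational iff the diagonal map $\O_\K^\times \otimes \Z_p \to \prod_{v\in S_p} U_v$ is \emph{primitive}, i.e.\ its cokernel is $\Z_p$-torsion free.

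\textbf{Step 2 (Heuristic for large $p$).} Primitivity of this diagonal map is equivalent to the reductions of a $\Z$-basis of $\O_\K^\times/\mu(\K)$ spanning an $\fq_p$-subspace of maximal rank $r_1+r_2-1$ inside $\bigoplus_{v\in S_p} U_v/U_v^p$ (of dimension $[\K:\Q]$). Under a Cohen--Lenstra-style random-model hypothesis, according to which the mod-$p$ reductions of a fixed basis behave essentially uniformly as $p$ varies, the probability of failure at a given $p$ is of order $p^{-(r_2+1)}$, which is summable in $p$. A Borel--Cantelli-type heuristic then predicts that only finitely many $p$ are exceptional for any given $\K$, exactly matching Gras's conjecture.

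\textbf{Main obstacle.} Converting this heuristic into a theorem is precisely the content of the conjecture. What is needed is an unconditional equidistribution statement for the mod-$p$ reductions of a fixed global unit in the local quotients $U_v/U_v^p$, uniform as $p\to\infty$. No such statement is currently available. The best unconditional progress concerns abelian fields and certain CM fields, where reflection theorems together with Greenberg-type vanishing of Iwasawa invariants rule out specific families of exceptional primes. For general $\K$ there is no known mechanism to prevent a fundamental unit from being accidentally a local $p$-th power at every $v\mid p$ for infinitely many $p$. My proposal is thus a clean reduction of the conjecture to this equidistribution problem; the equidistribution input appears to require genuinely new ideas, and is the reason the statement remains a conjecture rather than a theorem.
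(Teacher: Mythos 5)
The statement you were given is precisely that: a conjecture (Gras, \cite[Conjecture 7.11]{Gras-CJM}). The paper offers no proof of it — it only records the conjecture after a one-sentence discussion and later uses it as a hypothesis — so there is nothing in the paper against which a proof could be checked, and you were right not to manufacture one. Your Step 1 is exactly the reduction the paper itself sketches in the sentence preceding the conjecture (Leopoldt $\Rightarrow$ freeness of $\G_{S_p}$ is equivalent to torsion-freeness of $\G_{S_p}^{ab}$, and for large $p$ that torsion is the $p$-part of $\bigl(\prod_{v\in S_p} U_v\bigr)/\overline{\O_\K^\times}$), and your identification of the missing ingredient — an unconditional, $p$-uniform non-degeneracy statement for the reductions of global units in $\prod_{v\mid p} U_v/U_v^p$ — is an accurate description of why the statement remains open. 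As a piece of mathematics submitted in place of a proof it of course cannot be accepted, but as an assessment of the statement it is correct and consistent with the paper.

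One quantitative slip in your heuristic is worth fixing: the probability that $d=r_1+r_2-1$ random vectors in $\fq_p^{\,n}$ with $n=[\K:\Q]=r_1+2r_2$ fail to be independent is of order $p^{-(n-d+1)}=p^{-(r_2+2)}$, not $p^{-(r_2+1)}$. The distinction matters because with your exponent the totally real case ($r_2=0$) gives $p^{-1}$, whose sum over primes diverges, and your Borel--Cantelli argument would then predict infinitely many exceptional primes rather than finitely many; with the corrected exponent $p^{-2}$ the heuristic is summable in all signatures, which is what Gras's conjecture requires. (Already for a real quadratic field the failure condition is that the fundamental unit die in $\prod_{v\mid p}U_v/U_v^p\cong\fq_p^2$, a codimension-two event.) You should also note that even the corrected random-matrix model is contested precisely in the totally real case, where the condition is a Fermat-quotient-type vanishing and the analogy with Wieferich primes makes the "independence as $p$ varies" hypothesis the genuinely delicate point.
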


\smallskip

We use Gras' Conjecture to  produce  $p$-rational number fields $\L$ with large $p$-class group. 
  
\subsubsection{Results}  
  
  First, we obtain:

\begin{theo} \label{theo:unramified-prational}
Let $\K$  be a totally imaginary field of degree at least $12$ over $\Q$. Choose $p>2$ such that : $(i)$ $p$ splits completely in $\K/\Q$ and, $(ii)$ $\K$ is $p$-rational.
Then there exists a number field $\L/\K$ in $\K_{S_p}/\K$ such that $\L_\emptyset/\L$ is infinite.
Note that as Gal$(\L_{S_p}/\L)$ is a subgroup of the free pro-$p$ group Gal$(\K_{S_p}/\K)$ then $\L$ is $p$-rational.
\end{theo}

 \begin{proof} 
 Let $p>2$. As $p$ splits completely in $\K$,
$\mu_p \not \subset \K_v$  and $\G_v$ is a free pro-$p$ group on $2$ generators $x_v,\tau_v$, where $x_v$ can be chosen as  the Frobenius and  $\tau_v$ as a generator of the  inertia group.

Suppose moreover that $\G:=\G_{S_p}$ is $p$-rational.
Then we cut the free pro-$p$ group $\G$ on $r_2+1$ generators by all the commutators $[x_v,\tau_v]$ for all $v|p$, to obtain a pro-$p$ extension $\K_{S_p}^{p-ab}/\K$ corresponding to the maximal local abelian extension at every $v|p$ of $\K$ unramified outside $p$;  here $r_2$ is the number of complex embeddings of $\K$. 
Put $\Gamma:=\Gal(\K_{S_p}^{p-ab}/\K)$. A naive presentation $(\PP)$ of $\Gamma$ allows us to obtain the Golod-Shafarevich
% Poincaré 
 polynomial   $$P_\PP(t)=1-(r_2+1)t+2r_2t^2.$$
As $r_2 \geq 6$, we easily compute  that $P_\PP$ is negative on $]0,1[$,  and so  for a large given $k$, if we  cut by the powers 
$\tau_v^{p^k}$ of $\tau_v$, $v|p$, the extension  $\K_S^{{[k],p-ab}}/\K$ is infinite. As $\Z/p^k$ maps onto the inertia group of all $\p|p$, one concludes by a changing the base field.
\end{proof}

Recall that if a pro-$p$ group $\G$ passes the test of Golod-Shafarevich, then $\G$ is not $p$-adic analytic (when $d\geq 3$) and,  by Lubotzky-Mann \cite{Lubotzky-Mann}, the $p$-rank of the open subgroups $U$ of $\G$ tends to infinity with $[ \G:U]$. 
 In fact, one has the following due to Jaikin-Zapirain (see \cite{Jaikin} or \cite[Theorem 8.3]{Ershov}):
 
 \begin{theo}[Jaikin-Zapirain] \label{theo:jaikin}
 Suppose that a pro-$p$ group $G$ passes the Golod-Shafarevich  test. Then
 there exist infinitely many $n$ such that $\log_p  d_p \G_n  \geq (\log_p [\G:\G_n])^\beta $, for some $\beta \in ]0,1[$, where $\G_n$ is the Zassenahaus filtration of $\G$.
 \end{theo}

 In our context, as corollary, one obtains:
 
 \begin{coro}
 Let $\K$  be a totally imaginary field of degree at least $12$ over $\Q$. Choose $p>2$ such that : $(i)$ $p$ splits totally in $\K/\Q$ and, $(ii)$ $\K$ is $p$-rational.
Then there exists a constant $\beta >0$ and a sequence of  $p$-rational number fields $(\L_n)$ in $\K_{S_p}/\K$ such that $$\log d_p \Cl_{\L_n} \gg (\log [\L_n:\Q])^\beta,$$
where $\Cl_{\L_n}$ is the class group of $\L_n$. 
 \end{coro}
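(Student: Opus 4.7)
The plan is to apply Jaikin-Zapirain's Theorem~\ref{theo:jaikin} to the pro-$p$ group $\Gamma := \Gal(\K_S^{[k],p-ab}/\K)$ constructed in the proof of Theorem~\ref{theo:unramified-prational}, and then reinterpret its Zassenhaus quotients as a sequence of $p$-rational number fields whose $p$-class groups absorb the Golod-Shafarevich growth of $d_p \Gamma_n$.

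First I would revisit the proof of Theorem~\ref{theo:unramified-prational}: for $[\K:\Q]=2r_2 \geq 12$ and $k$ sufficiently large, the polynomial $1-(r_2+1)t+2r_2 t^2 + 2r_2 t^{p^k}$ is negative at some point of $]0,1[$, so $\Gamma$ passes the Golod-Shafarevich test. Moreover, a suitable base change produces a finite extension $\L_0/\K$ inside $\K_{S_p}/\K$ for which $\K_S^{[k],p-ab}/\L_0$ is everywhere unramified; Proposition~\ref{prop:prationalheridity} then ensures that $\L_0$ is $p$-rational. Applying Theorem~\ref{theo:jaikin} to $\Gamma$ provides a constant $\beta \in \,]0,1[$ and infinitely many $n$ with
\begin{equation*}
\log_p d_p \Gamma_n \;\geq\; \bigl(\log_p [\Gamma:\Gamma_n]\bigr)^\beta,
\end{equation*}
where $(\Gamma_n)$ denotes the Zassenhaus filtration of $\Gamma$. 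Because $\Gamma$ is a finitely generated pro-$p$ group, the $\Gamma_n$ form a base of open neighborhoods of $1$, and hence $\Gamma_n \subset \Gal(\K_S^{[k],p-ab}/\L_0)$ for all $n$ large enough.

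For each such $n$, I take $\L_n \subset \K_S^{[k],p-ab}$ to be the fixed field of $\Gamma_n$; then $\L_0 \subset \L_n \subset \K_{S_p}$, and Proposition~\ref{prop:prationalheridity} yields that $\L_n$ is $p$-rational. Since $\K_S^{[k],p-ab}/\L_0$ is unramified, so is $\K_S^{[k],p-ab}/\L_n$. Hence $\Gamma_n^{ab}$ is the Galois group of an unramified abelian pro-$p$ extension of $\L_n$, and therefore a quotient of $\Cl_{\L_n} \otimes \Z_p$. Since $d_p \Gamma_n = d_p \Gamma_n^{ab}$ for any pro-$p$ group, we get $d_p \Cl_{\L_n} \geq d_p \Gamma_n$; combined with the Jaikin-Zapirain inequality and the identity $[\L_n:\Q] = [\Gamma:\Gamma_n]\cdot[\K:\Q]$, this yields the desired estimate $\log d_p \Cl_{\L_n} \gg (\log[\L_n:\Q])^\beta$.

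The main subtlety I would flag is the reduction to the \emph{unramified} setting before invoking class field theory: one must work over $\L_0$ rather than directly over $\K$ so that $\Gamma_n^{ab}$ corresponds to an unramified extension of $\L_n$ and is thus dominated by $\Cl_{\L_n}$. Without the base change to $\L_0$ supplied by Theorem~\ref{theo:unramified-prational}, the quotient $\Gamma_n^{ab}$ would correspond to an extension that is only locally abelian at primes above $p$, with inertia contributions, and no direct class-group bound would be available.
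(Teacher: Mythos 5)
Your proposal is correct and follows essentially the same line as the paper's proof: apply Jaikin-Zapirain to $\Gamma = \Gal(\K_{S_p}^{[k],p-ab}/\K)$, pass to the fixed fields $\L_n$ of the Zassenhaus filtration, invoke Proposition~\ref{prop:prationalheridity} for $p$-rationality, and use that the tower becomes everywhere unramified below a suitable finite level so that $d_p\Gamma_n^{ab} \leq d_p\Cl_{\L_n}$ by class field theory. The only (cosmetic) difference is that the paper does not introduce a separate base field $\L_0$ via Theorem~\ref{theo:unramified-prational}; instead it directly observes that, since the $\Gamma_n$ form a neighborhood basis of $1$ and the inertia groups at $v\mid p$ are finite, there is an $n_0$ with $\K_{S_p}^{[k],p-ab}/\K_{n_0}$ unramified, which is exactly the "main subtlety" you correctly flagged.
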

 
 \begin{proof}Choose $k$ as in proof of Theorem \ref{theo:unramified-prational} such that $\K_{S_p}^{[k],p-ab}/\K$ is infinite. Put $\G=\Gal(\K_{S_p}^{[k],p-ab}/\K)$, and consider $\G_n$ the Zassenhaus filtration of $\G$. Let $\K_n$ be the subfield of $\K_{S_p}^{[k],p-ab}/\K $ fixed by $\G_n$: by Proposition \ref{prop:prationalheridity} all the fields $\K_n$  are $p$-rational. Take $n_0$  large enough so that  the pro-$p$ extension $\K_{S_p}^{[k],p-ab}/\K_{n_0}$ is unramified: this is always possible because $G_n$ forms a filtration of open subgroups of $\G$.

By hypothesis the group $\G$ passes the Golod-Shafarevich test: by Theorem \ref{theo:jaikin}, for infinitely many $n\geq n_0$,  we get  $\log_p  d_p \G_n  \geq (\log_p [\G:\G_n])^\beta $, for some $\beta \in ]0,1[$. To conclude it suffices to note that  for $n\geq n_0$ the extension
  $\K_{S_p}^{[k],p-ab}/\K_n$ is unramified, and then by class field theory one has $d_p \Cl_{\K_n} \geq d_p \G_n$.
 \end{proof}
 
 \begin{rema}
 When the class group of $\K$ is not trivial, $p$-rationality of $\K$ implies the Hilbert $p$-class field $\HH$ of $\K$ must be contained 
in the compositum of the $\Z_p$-extensions of $\K$,  something that can be difficult to arrange. See \cite[Chapter IV, \S 3]{gras} for a good explanation.
 \end{rema}

%%%%%%%%%%%%%%%%%%%%%%%%%%%%%%%%%%%%%%%%%%%%%%%%%%%%%%%%%%%%%%%%%%%%%%%%%%%%
%%%%%%%%%%%%%%%%%%%%%%%%%%%%%%%%%%%%%%%%%%%%%%%%%%%%%%%%%%%%%%%%%%%%%%%%ù

\section{Depth  of ramification}

\subsection{Motivation} 
%\label{rema:depthcutting}
Let us start with one comment that  motivates this section.
Let $P(t)=1-dt+rt^2$ be a polynomial  with no root on $[0,1]$ but such that 
$1-dt+(r-1)t^2$ has a root. For example, take the totally real field of $\S 3.3.2$ where $(d,r)=(9,21)$ .

\medskip

Suppose that $\G_\emptyset$ has $P$ as Golod-Shafarevich polynomial for a certain minimal presentation $(\PP)$. Then with $S=\{\p\}$, $\p$ coprime to $p$, and when $\mu_p \subset \K$, the group $\G_S$ has parameters $(d,r-1)$, or $(d+1,r)$. If $r>1$, it is easy to see  $\G_S$ is infinite in either case.
Suppose now that 
 the generator of  inertia  at $\p$, $\tau_\p$, has   depth at least $k$ in $\G_S$. If we cut $\G_S$ by $\langle \tau_\p \rangle$, the 
 Golod-Shafarevich polynomial becomes $1-dt+(r-1)t^2+t^{k}$, and for large $k$  it has a root.  In this case we can introduce the relation
 $\tau_\p$ and observe $\K_\emptyset/\K$ is infinite. 
 For  $(d,r)=(9,21)$ we need $k \geq 3$.

\begin{Question} Suppose $\G_S$ is infinite.
To simplify, take $S=\{\p\}$ with $\p$ coprime to~$p$. How deep can the generator $\tau_\p$  of tame inertia be in $\G_S$?
\end{Question}

\subsection{Add a splitting condition}
\subsubsection{Detect the level of inertia}
Let $S$ and $T$ be finite disjoint sets of primes of $\K$. We denote by 
$\K_S^T$ the maximal pro-$p$ subextension of $\K_S/\K$ where all places of $T$ split completely. Put $\G_S^T=\Gal(\K_S^T/\K)$. 
Consider the Frattini series $\Phi_n(\G_\emptyset^T)$ and $\Phi_n(\G_S^T)$. Set $\K_n:=(\K_\emptyset^T)^{\Phi_n(\G_\emptyset^T)}$.
We abuse notation and set $\omega:=\omega_{\G_S^T}$.

If $\L/\K$ is a finite subextension of $\K_S^T/\K$, we denote by $\G_{\L,S}^T$ the Galois group $\Gal(\L_S^T/\L)$.

\smallskip

To simplify, we assume that $S=\{\p\}$ where $\p \subset \O_\K$ is coprime to $p$. Let $\tau_\p \in \G_S^T$ be a generator of the inertia group at $\p$ in $\K_S^T/\K$.

\begin{lemm} \label{lemma:detectlevel}
If $d_p \G_{\K_n,\emptyset}^T=d_p \G_{\K_n,S}^T$ for some $n$, then $\omega(\tau_\p) \geq 2^{n}$.
\end{lemm}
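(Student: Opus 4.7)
The plan is to reformulate the conclusion $\omega(\tau_\p)\ge 2^n$ as the containment $\tau_\p\in\Phi_{n+1}(\G_S^T)$, which suffices by Proposition \ref{corollary:detect}(iii). Write $G=\G_S^T$, $G_\emptyset=\G_\emptyset^T$, let $\pi\colon G\twoheadrightarrow G_\emptyset$ be the canonical surjection, and set $N=\ker\pi$; since $S=\{\p\}$, $N$ is the normal closure of $\tau_\p$ in $G$. The first step will be to identify the two groups appearing in the hypothesis. Because $\K_n/\K$ is a pro-$p$ unramified extension in which the primes of $T$ already split completely, any extension of $\K_n$ of the relevant type is automatically of that type over $\K$ and conversely, so $(\K_n)_\emptyset^T=\K_\emptyset^T$ and $(\K_n)_S^T=\K_S^T$. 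This yields $\G_{\K_n,\emptyset}^T=\Phi_n(G_\emptyset)$ and, using $\pi(\Phi_n(G))=\Phi_n(G_\emptyset)$, $\G_{\K_n,S}^T=\pi^{-1}(\Phi_n(G_\emptyset))=\Phi_n(G)\cdot N=:\H_n$.

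Applied to the surjection $\H_n\twoheadrightarrow\Phi_n(G_\emptyset)$ with kernel $N$, the equal-rank hypothesis translates by standard pro-$p$ theory into $N\subseteq\Phi(\H_n)$. I will then establish the containment $\tau_\p\in\Phi_{n+1}(G)$ by induction on $n\ge 1$. The base case $n=1$ is immediate: $\H_1=G$ and the hypothesis reads $\tau_\p\in N\subseteq\Phi(G)=\Phi_2(G)$. For the inductive step, the first move is to observe that the hypothesis at level $n$ implies the hypothesis at level $n-1$, since $\Phi_n(G)\subseteq\Phi_{n-1}(G)$ forces $\H_n\subseteq\H_{n-1}$ and hence $\Phi(\H_n)\subseteq\Phi(\H_{n-1})$.

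The crux of the inductive step is then the following collapse. By induction one has $\tau_\p\in\Phi_n(G)$; since the Frattini filtration is characteristic, $\Phi_n(G)$ is normal in $G$ and therefore contains the whole normal closure $N=\langle\tau_\p\rangle^\N$. Consequently $\H_n=\Phi_n(G)\cdot N=\Phi_n(G)$, so $\Phi(\H_n)=\Phi(\Phi_n(G))=\Phi_{n+1}(G)$, and the hypothesis $N\subseteq\Phi(\H_n)$ yields $\tau_\p\in\Phi_{n+1}(G)$, completing the induction. I do not anticipate a genuine obstacle: once the identifications $(\K_n)_\emptyset^T=\K_\emptyset^T$ and $(\K_n)_S^T=\K_S^T$ are in place, the argument is essentially forced by the characteristic nature of the Frattini filtration, the only real content being the monotonicity observation $\Phi(\H_n)\subseteq\Phi(\H_{n-1})$ that lets the induction start from the trivial base case.
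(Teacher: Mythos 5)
Your proof is correct and is essentially the paper's argument recast in pure pro-$p$ group theory: the paper's field-theoretic observation that composing a ramified $p$-extension of $\K_i$ with the unramified extension $\K_n/\K_i$ contradicts the hypothesis is precisely your monotonicity step $\H_n\subseteq\H_{n-1}\Rightarrow\Phi(\H_n)\subseteq\Phi(\H_{n-1})$, and the paper's inductive identification $\K_i=\K_i'$ up to $i=n+1$ is your inductive conclusion $\tau_\p\in\Phi_{n+1}(G)$.
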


\begin{proof}For $m\geq 1$, write  $\K_m'=(\K_S^T)^{\Phi_m(\G_S^T)}$. 
If, for $i \leq n+1$, we had a $p$-extension of $\K_i$ unramified outside $S$ but actually ramified there, we could take its composite with the unramified
extension $\K_n/\K_i$ 
to contradict the equality of our hypothesis. Thus
 $d_p \G_{\K_i,\emptyset}^T=d_p \G_{\K_i,S}^T$ for all   $i\leq n+1$. 
Hence, one has $\K_2= \K_2'$,  then $\K_3=\K_3'$ etc.  up to $\K_{n+1}=\K_{n+1}'$. In particular the extension $\K_{n+1}'/\K$ is unramified and  $\tau_\p \in \Phi_{n+1}(\G_S^{T})$; in other words $\G_S^T/\Phi_{n+1}(\G_S^{T}) \simeq \G_\emptyset^T/\Phi_{n+1}(\G_\emptyset^{T})$. By Proposition \ref{corollary:detect} $(iii)$, we get $\omega(\tau_\p) \geq 2^n$.
\end{proof}

\subsubsection{Depth and freeness}
 Recall from \S 3.2 that
  $$\V_S=\{x\in \K^{\times} | x \in \K^{\times p}_v \ \forall v \in S  \mbox{ and} \ v(x) \equiv 0 \mbox{ mod } p , \ \forall v\}$$ and
set  $$\V^T=\{x\in \K^{\times} | \ v(x)\equiv \ 0  \ {\rm mod}  \  p, \ \forall v \notin T\}$$ and
$$\V^T_S=\{x\in \K^{\times} |  x \in \K^{\times p}_v \, \forall \,v \in S \mbox{ and}   \ v(x)\equiv \ 0  \ {\rm mod}  \  p, \ \forall v \notin T  \}.$$

Note  $\V^\emptyset_S = \V_S$. If we switch fields to some $\L \supseteq \K$, we will include the field in the notation
to avoid confusion, e.g. $\V^T_\L$ or $\V^T_{\L,S}$. 
One  has 
\begin{eqnarray} \label{ES1} 1\longrightarrow \O_{\K,T}^{\times}/\O_{\K,T}^{\times p} \longrightarrow \V_\K^T/\K^{\times p} \longrightarrow \Cl_\K^T[p] \longrightarrow 1,\end{eqnarray}
where  $\O_{\K,T}^{ \times}$ denotes the group of $T$-units  of $\K$ and $\Cl_\K^T$ the $p$-Sylow subgroup of the $T$-class group of $\K$.
Put $\K':=\K(\mu_p)$ and
$\K_{(T)}'=\K'(\sqrt[p]{\V^T})$.
 One has  \cite[Chapter V, Corollary 2.4.2]{gras} involving the Artin symbol $ \FF{\K_{(T)}'/\K'}{\cdot}$ in $\Gal(\K_{(T)}'/\K')$.
 Note that $S$ and $T$ there are our $T$ and $S$ here respectively. 
\begin{theo}[Gras] \label{theo:Gras}
Let $S=\{\p_1,\cdots, \p_s\}$ be a tame set of  primes of $\K$.
% all such that $|\O_\K/\p_i|\equiv 1 ({\rm mod} \ p)$. 
There exists a cyclic degree $p$ extension $\F/\K$, unramified outside $S$, totally ramified at $S$, and where each place of $T$ splits completely, if and only if for  $i=1,\cdots, s$, there exist $a_i \in \fq_p^\times$ such that
$$\prod_{i=1}^s \FF{\K_{(T)}'/\K'}{\P_i}^{a_i} =1 \ \in \Gal(\K_{(T)}'/\K'),$$
where $\P_i|\p_i$ is any prime in  $\O_{\K'}$ above $\p_i$. 
\end{theo}

Choose now $\p$ of $\K$ whose Frobenius in $\K'_{(T)}/\K$ has order a multiple of $p$ and set $S=\{\p\}$.
 This implies that for $\P\subset \O_{\K'}$ with $\P | \p$ the Frobenius $x_\P \in \Gal(\K'_{(T)}/\K')$ at $\P$ is nontrivial.
Theorem~\ref{theo:Gras} implies  that $d_p \G_S^T=d_p \G_\emptyset^T$, hence $\tau_\p$ has depth at least 2 in $\G_S^T$ by Lemma~\ref{lemma:detectlevel}. 
We want to apply  this principle to the number field $\K_2=(\K_\emptyset^T)^{\Phi_2(\G_\emptyset^T)}$ bearing in mind the Galois action of $\G_\emptyset^T/\Phi_2(\G_\emptyset^T)$.

\medskip

Let us fix a Galois extension $\L/\K$ with Galois group $H$ inside $\K_\emptyset^T/\K$.
One has the following consequence of Theorem \ref{theo:Gras}.

\begin{coro} \label{lemm:freesubmodule} 
Suppose that $\V_{\L}^T/\L^{\times p}$ has a non trivial free $\fq_p[H]$-submodule.
Then there exists a prime $\p \subset \O_\K$ such that   $d_p \G_{\L,\emptyset}^T=d_p \G_{\L,S}^T$ where $S=\{\p\}$. 
\end{coro}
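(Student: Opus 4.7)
The plan is to deduce the corollary from Theorem~\ref{theo:Gras} applied to $\L$, after translating the submodule hypothesis through Kummer duality and producing the required prime via Chebotarev density. First, set $M = \V_\L^T/\L^{\times p}$ and $N = \Gal(\L_{(T)}'/\L')$, viewed as $\fq_p[H]$-modules, where $H \simeq \Gal(\L'/\K')$ acts on $N$ by conjugation inside $\Gal(\L_{(T)}'/\K')$. Since $\L/\K$ is pro-$p$ and $\Delta := \Gal(\K'/\K)$ has order prime to $p$, we have $\L \cap \K' = \K$, hence $\Gal(\L'/\K) = H \times \Delta$, and the Kummer pairing identifies $N$ with $\mathrm{Hom}(M,\mu_p)$ as $\fq_p[H]$-modules (the $H$-action on $\mu_p$ being trivial, since $H$ is pro-$p$ while $\mu_p$ carries only the $\Delta$-action). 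Because $\fq_p[H]$ is self-dual, a nonzero free $\fq_p[H]$-submodule of $M$ yields a surjection $\pi : N \twoheadrightarrow \fq_p[H]$.

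Next, I pick a lift $y \in N$ of $1 \in \fq_p[H]$ and apply Chebotarev density to the Galois extension $\L_{(T)}'/\K$: there exists a prime $\p \subset \O_\K$, $\p \notin T$, unramified in $\L_{(T)}'/\K$, together with $\P' \subset \O_{\L_{(T)}'}$ above $\p$, such that $\Frob_{\P'} = y$. Since $y$ fixes $\L'$, the prime $\p$ splits completely in $\L'/\K$, and the primes of $\L$ above $\p$ are $\{\P_\sigma := \sigma(\P_1) : \sigma \in H\}$, where $\P_1 := \P' \cap \O_\L$. Choosing $\P'_\sigma := \tilde\sigma(\P')$ with $\tilde\sigma := (\sigma,1) \in H \times \Delta$, a standard decomposition-group calculation gives $\Frob_{\P'_\sigma} = \tilde\sigma\, y\, \tilde\sigma^{-1} = \sigma \cdot y$ in $N$. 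Under $\pi$, the family $\{\Frob_{\P'_\sigma}\}_{\sigma \in H}$ maps to the standard $\fq_p$-basis of $\fq_p[H]$, hence is $\fq_p$-linearly independent in $N$.

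Finally, suppose toward contradiction that $d_p \G_{\L,S}^T > d_p \G_{\L,\emptyset}^T$ with $S = \{\p\}$. Then some $\Z/p$-extension of $\L$ is unramified outside the primes of $\L$ above $\p$, $T$-split, and ramified at a non-empty subset $S' \subseteq \{\P_\sigma\}_{\sigma \in H}$; since the extension is tame of degree $p$, ramification at $S'$ is total. Theorem~\ref{theo:Gras} applied to $\L$ with $S'$ then produces a nontrivial $\fq_p$-linear combination of the corresponding $\Frob_{\P'_\sigma}$ equal to the identity, contradicting linear independence. The main obstacle I anticipate is Step~1: verifying carefully that the Kummer pairing is $H$-equivariant, so that the conjugation action on $N$ matches the natural action on $\mathrm{Hom}(M,\mu_p)$, together with the correct bookkeeping between $H$ and $\Delta$. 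Once this is in place, Chebotarev and Theorem~\ref{theo:Gras} close the argument cleanly.
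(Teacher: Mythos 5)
Your proof is correct and follows essentially the same route as the paper's: Kummer duality (using that $\fq_p[H]$ is Frobenius/self-dual) to transfer the free module structure from $\V_\L^T/\L^{\times p}$ to $\Gal(\L'_{(T)}/\L')$, Chebotarev to produce $\p$ with Frobenius realizing a free $H$-orbit, and Theorem~\ref{theo:Gras} applied over $\L$ to forbid any ramified degree-$p$ extension. The only differences are cosmetic: you phrase the duality step as a surjection $\pi:N\twoheadrightarrow\fq_p[H]$ rather than a rank-one free direct summand, and you argue by contradiction rather than directly, but the key ideas are identical.
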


\begin{proof} 
Suppose that $\V_{\L}^T/\L^{\times p}$ has a  free $\fq_p[H]$-submodule $M$ of rank $1$; as the algebra $\fq_p[H]$ is Frobenius, the  free submodule $M$ is a  direct factor in $\V_{\L}^T/\L^{\times p}$. By Kummer duality, one deduces that $\Gal(\L'_{(T)}/\L')$ contains a free $\fq_p[H]$-module of rank $1$, generated by some $g$.  By Chebotarev's density theorem, choose a prime $\p \subset \O_\K$ such that its Frobenius in $\Gal(\L_{(T)}'/\K)$ is in the conjugacy class of $g$; the prime $\p$ splits completely in $\L'/\K$.  Put $S=\{\p\}$.
Denote by $\QQ_0$ a prime ideal in $\O_{\L'}$ above $\p$ such that $\FF{\L'_{(T)}/\L'}{\QQ_0}=g$. 
But $\forall h \in H$ we have
$\FF{\L'_{(T)}/\L'}{\QQ_0^h}=g^{h^{-1}}$, and as $\langle g \rangle_H$ is a free $\fq_p[H]$-module, there is no nontrivial relation between the $\FF{\L'_{(T)}/\L'}{\QQ_0^h}$'s. By Theorem \ref{theo:Gras}, there is no degree $p$ cyclic extension $\F/\L$ ramified at some places of $S$, unramified outside $S$, and where each place of $T$ splits completely, and then
$d_p \G_{\L,\emptyset}^T = d_p \G_{\L,S}^T$
\end{proof}

\begin{rema}
Let $\L/\K$ be a Galois extension of number fields with Galois group $H$. Let $T$ be a $H$-invariant set of places of $\L$. Here are  two ways to produce situations where  $\V_\L^T/\L^{\times p}$ has a free-$H$-part:
\begin{itemize}
\item[$(i)$] for large $|T|$ with size  depending on $|H|$ (thanks to a bound given by Ozaki \cite{Ozaki}, see also \cite{Hajir-Maire-analytic}), we are guaranteed that  $ \O_{\L,T}^{\times} \otimes \fq_p$, and then also $\V_\L^T/\L^{\times p}$ by (\ref{ES1}), contains a nontrivial free $\fq_p[H]$-submodule. 
But the bound for $|T|$ is very bad;
\item[$(ii)$] by  Kummer theory, and by an appropriate choice of $T$, we are guaranteed that   $\V_\L^{T}/{\L^\times}^p$ contains a nontrivial free $\fq_p[H]$-submodule. This is the
method we will use.
\end{itemize}
\end{rema}

%%%%%%%%%%%%ù

\subsection{A result} \label{section:depth-inertia}

We  prove the following

\begin{theo} \label{theo:depth} Let $\K$ be a number field. 
 Then given $k>0$ there exists two different primes   $\q$ and $\p$, coprime to $p$,  such that 
$\omega(\tau_\p)\geq k$ in $\Gal(\K_{\{\p\}}^{\{\q\}}/\K)$. \end{theo}

As we will see the proof of Theorem \ref{theo:depth} can be reduced to the next Proposition.

\begin{prop} \label{theo:free-part}
Let $\L/\K$ be a given finite $p$-extension with Galois group $H$. 
There exists a positive density set $\Theta_1$ of primes $\q$ of $\K$, all coprime to  $p$ and all split  completely  in $\L/\K$, such that for all 
 $t\in \mathbb N$ 
and for all sets $T=\{\q_1,\cdots, \q_t \} \subset \Theta_1$ of $t$ different primes, one has  $$ \bigoplus_{i=1}^t \fq_p[H] \hookrightarrow \V_\L^T/\L^{\times p}.$$
\end{prop}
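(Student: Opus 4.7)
The plan is to use Chebotarev's theorem applied to the Hilbert $p$-class field of $\L$; this produces a uniform positive-density set $\Theta_1$ from which, for any finite $T\subset\Theta_1$, one can extract free $\fq_p[H]$-generators for $\V_\L^T/\L^{\times p}$ via the divisor map.

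Let $\HH_\L$ denote the Hilbert $p$-class field of $\L$, i.e.\ the maximal unramified abelian pro-$p$ extension of $\L$; since $\L/\K$ is Galois, so is $\HH_\L/\K$. Take $\Theta_1$ to be the set of primes of $\K$ coprime to $p$ that split completely in $\HH_\L/\K$. By Chebotarev, $\Theta_1$ has density $1/[\HH_\L:\K]>0$. For any $\q\in\Theta_1$ and any prime $\QQ$ of $\L$ above $\q$, $\QQ$ splits completely in $\HH_\L/\L$, so by class field theory the image of $[\QQ]$ in the $p$-Sylow of $\Cl_\L$ is trivial; equivalently, the order of $[\QQ]$ in $\Cl_\L$ is coprime to $p$. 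Letting $n$ be the non-$p$-part of $|\Cl_\L|$ (a uniform constant), for every such $\QQ$ we can write $\QQ^n=(\alpha_\QQ)$ in $\L^\times$ with $\gcd(n,p)=1$.

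Now fix $T=\{\q_1,\dots,\q_t\}\subset\Theta_1$. For each $i$, pick one prime $\QQ_i\mid\q_i$ in $\L$ and an $\alpha_i\in\L^\times$ with $(\alpha_i)=\QQ_i^n$; then $\alpha_i\in\V_\L^T$, giving a class $\bar\alpha_i\in\V_\L^T/\L^{\times p}$. Consider the $H$-equivariant map $\overline{\mathrm{div}}:\V_\L^T/\L^{\times p}\to\bigoplus_{i=1}^t\bigoplus_{\QQ\mid\q_i}\fq_p\cdot\QQ$ induced by the divisor. Since each $\q_i$ splits completely in $\L/\K$, the $i$-th inner sum is $H$-isomorphic to $\fq_p[H]$ (as $H$ acts simply transitively on the $|H|$ primes above $\q_i$). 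The identity $\overline{\mathrm{div}}(h\cdot\bar\alpha_i)=n\cdot h(\QQ_i)$ shows that the cyclic submodule $M_i:=\fq_p[H]\cdot\bar\alpha_i$ maps $\fq_p[H]$-linearly onto the $i$-th summand (using $n\in\fq_p^\times$ and that $\{h(\QQ_i):h\in H\}$ is a basis). As $M_i$ is a cyclic $\fq_p[H]$-module surjecting onto $\fq_p[H]$, a module of $\fq_p$-dimension $|H|$, this surjection must be an isomorphism, whence $M_i\cong\fq_p[H]$.

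Finally, the images of $M_1,\dots,M_t$ under $\overline{\mathrm{div}}$ lie in pairwise distinct direct summands, so a relation $\sum_i\beta_i\bar\alpha_i=0$ with $\beta_i\in\fq_p[H]$ forces each $\beta_i\cdot n\,\QQ_i=0$, hence $\beta_i=0$. This yields the desired embedding $\bigoplus_{i=1}^t\fq_p[H]\cong\bigoplus_iM_i\hookrightarrow\V_\L^T/\L^{\times p}$. The only delicate ingredient is the choice of $\Theta_1$ via the Hilbert $p$-class field, which is what makes the set both independent of $T$ and productive of $p'$-generators; the verification of the free $\fq_p[H]$-structure is then formal bookkeeping with the divisor map.
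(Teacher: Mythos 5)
Your proof is correct, and it takes a genuinely different, more elementary route than the paper's. You stay entirely inside $\L$: your Chebotarev condition is splitting in the Hilbert $p$-class field of $\L$, which guarantees each chosen $\QQ_i$ becomes principal after raising to the prime-to-$p$ part $n$ of $|\Cl_\L|$, and the elements $\alpha_i$ with $(\alpha_i)=\QQ_i^{\,n}$ are explicit free generators, detected by the mod-$p$ divisor map onto $\bigoplus_i\bigoplus_{\QQ\mid\q_i}\fq_p\,\QQ\simeq\bigoplus_i\fq_p[H]$. The paper instead passes to $\L'=\L(\mu_p)$, imposes splitting in $\F=\L'(\sqrt[p]{\O^{\times}_{\L',\Sigma}})$ for an auxiliary $\Gal(\L'/\K)$-invariant set $\Sigma$ trivializing the class group, computes $\fq_p\otimes\G^{\Sigma,ab}_{\L',T}\simeq\prod_{w\mid v\in T}U_w/U_w^p\simeq\bigoplus_i\fq_p[H][\Delta]$ by class field theory, and transfers this into $\V^T_{\L'}/(\L'^{\times})^p$ by Kummer duality before descending to $\L$ by taking $\Delta$-invariants. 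Your argument dispenses with roots of unity, Kummer duality and the reflection twist, and exhibits the free submodule explicitly; the only additional thing the paper's heavier splitting condition buys is local information at the primes of $\Theta_1$ (namely $\mu_p\subset\K_v$, hence $\N(\q)\equiv 1\bmod p$, and $\O^{\times}_{\L',\Sigma}\subset U_v^p$), which is not part of the statement and is not needed in the application to Theorem \ref{theo:depth}, where the $\q$'s serve only as split primes. Both constructions produce a single set $\Theta_1$ that works uniformly for every finite $T\subset\Theta_1$, as required.
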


\begin{proof} Put $\L'=\L(\mu_p)$, and  $\Delta=\Gal(\L'/\L)$.  
Let us start with the following lemma 

\begin{lemm} \label{lemm:sans-p} 
There exists a positive density set $\Theta_1$ of primes $\q$ of $\K$, all coprime to  $p$, such that for all $t \in \Z_{\geq 0}$  and for all sets
 $T=\{\q_1,\cdots, \q_t \} \subset \Theta_1$ of $t$ different primes, one has  $ \fq_p\otimes \G_{\L',T}^{ab} \twoheadrightarrow  \bigoplus_{i=1}^t \fq_p[H][\Delta]$ as $\Gal(\L'/\K)$-modules.
Moreover the $\q_i$'s split completely in $\L/\K$.
\end{lemm}

\begin{proof}   First, let us choose  a set of places $\Sigma$ of $\L'$, $\Gal(\L'/\K)$-invariant,  such that $G_{\L',\emptyset}^{\Sigma,ab}=\{1\}$.
Put $\F=\L'(\sqrt[p]{\O_{\L',\Sigma}^\times})$.  The extension $\F/\K$ is Galois and  let $\Theta_1$ be the 
 Chebotarev   
set of places of  $\K$ that split completely in $\F/\K$. The splitting implies that for  each $v \in \Theta_1$ one has 
 the equality of completions
$\K_v=\L'_v=\L'_v(\sqrt[p]{\O_{\L',\Sigma}^\times})$, so  $\O_{\L',\Sigma}^{\times} \subset U_v^p$ and $\mu_p \subset \K_v$.
Take now   $T=\{v_1,\cdots, v_t\}$ a set of $t$ different places of $\Theta_1$  all coprime to~$p$. By class field theory one 
has: $$\fq_p\otimes \Gg_{\L',T}^{\Sigma,ab} \simeq \dfrac{\prod_{w|v\in T} U_w}{\O_{\L',\Sigma}^{\times} \prod_{w|v\in T} U_w^p} \simeq \prod_{w|v\in T} U_w/U_w^p.$$
As all places of  $T$ split completely  in $\L'/\K$, then $$\prod_{w|v \in T} U_w/U_w^p \simeq \fq_p[H][\Delta].$$ 
One concludes by noting that  $\fq_p\otimes \G_{\L',T}^{ab} \twoheadrightarrow \fq_p\otimes \Gg_{\L',T}^{\Sigma,ab}\simeq \bigoplus_{i=1}^t \fq_p[H][\Delta]$.
\end{proof}

The Kummer radical $R_T$ of the maximal $p$-elementary extension of~$\L'$, unramified outside~$T$, is a subgroup of $\V_{\L'}^T/({\L'}^\times)^p$ (remember that $T$ is coprime to $p$). Hence by Lemma~\ref{lemm:sans-p} we get
$$\bigoplus_{i=1}^t \big(\fq_p[H][\Delta]\big)^*  \hookrightarrow R_T \subset \V_{\L'}^T/({\L'}^\times)^p,$$
where $*$ denotes the reflection action following $\Delta$. By taking the $\Delta$-invariant, 
Proposition~\ref{theo:free-part} holds by noting that $\left(\V_{\L'}^T/({\L'}^\times)^p\right)^\Delta \simeq \V_{\L}^T/{\L}^{\times p}$.
\end{proof}

\begin{proof}(of Theorem \ref{theo:depth}).
 Given $k>0$, write $n=\lceil \log_2k \rceil$.
 Let $\L=(\K_\emptyset)^{\Phi_n(\G_\emptyset)}$ and put $H=\Gal(\L/\K)$. By Proposition \ref{theo:free-part}, choose a prime $\q$, coprime to $p$ and that splits completely in $\L/\K$, such that $\fq_p[H] \hookrightarrow \V_{\L}^T/\L^{\times p}$ where $T=\{\q\}$. 
So $\V_\L^T/\L^{\times p}$ contains a free nontrivial $\fq_p[H]$-submodule.  Then by Corollary~\ref{lemm:freesubmodule}, there exists a prime $\p \subset \O_\K$  such that  $d_p \G_{\L,\emptyset}^T=d_p \G_{\L,S}^T$ where $S=\{\p\}$.

But, as $\q$ splits completely in $\/\K$, observe now that  $(\K_\emptyset)^{\Phi_n(\G_\emptyset)}=(\K_\emptyset^T)^{\Phi_n(\G_\emptyset^T)}$.
Then by Lemma \ref{lemma:detectlevel}, we get that the depth of $\tau_\p \in \G_S^T=\Gal(\K_S^T/\K)$ is at least $2^n \geq k$.
\end{proof}

\begin{rema} The reader may wonder why one can't, for instance in the totally real example of \S 3.3.2, simply apply Theorem~\ref{theo:depth}
for some $\p$ whose $\tau_\p$ will have depth $3$ for some $\q$. The difficulties are that first $\G^{ \{\q\} }_{ \{\p\} }$ may have many more
relations imposed by the splitting condition, and second if one removes the splitting condition,  the kernel of the map
$\G_{ \{\p\} }    \twoheadrightarrow \G^{ \{\q\} }_{ \{\p\} }$ might contain elements of depth $2$, so  the preimage of $\tau_\p$ 
may have depth $2$.
\end{rema}

%%%%%%%%%%%%%%%%%%%%%%%%%%%%%%%%%%%%%%%%%%%%

  %%%%%%%%%%%%%%%%%%%%%%%%%%%%%%%%%%%%%%%%%%%%%%%%%%%%%%%%%%%%%%%%%%%%%%%%%%%%%%%%%%%%%%%%%%ùù
%%%%%%%%%%%%%%%%%%%%%%% Bibliography %%%%%%%%%%%%%%%%%%%%%%%%%%%%%%%%%%%%%%%%%%%%%%%%%ùùù
%%%%%%%%%%%%%%%%%%%%%%%%%%%%%%%%%%%%%%%%%%%%%%%%%%%%%%%%%%%%%%%%%%%%%%%%%%%%%%%%%%%%%%%%%%ùù

\end{document}